
\documentclass[oneside,english]{amsart}
\usepackage[T1]{fontenc}
\usepackage[latin9]{inputenc}
\usepackage{babel}
\usepackage{amstext}
\usepackage{amsthm}
\usepackage{amssymb}
\usepackage[unicode=true,pdfusetitle,
 bookmarks=true,bookmarksnumbered=false,bookmarksopen=false,
 breaklinks=false,pdfborder={0 0 1},backref=false,colorlinks=false]
 {hyperref}

\makeatletter
\numberwithin{equation}{section}
\numberwithin{figure}{section}
\theoremstyle{plain}
\newtheorem{thm}{\protect\theoremname}
  \theoremstyle{plain}
  \newtheorem{prop}[thm]{\protect\propositionname}
  \theoremstyle{plain}
  \newtheorem{lem}[thm]{\protect\lemmaname}

\makeatother

  \providecommand{\lemmaname}{Lemma}
  \providecommand{\propositionname}{Proposition}
\providecommand{\theoremname}{Theorem}

\begin{document}

\title{\textup{Greatest lower bounds on Ricci curvature of homogeneous toric
bundles}}

\author{Yi Yao}

\thanks{Supported by China Postdoctoral Science Foundation.}
\begin{abstract}
For Fano homogeneous toric bundles, we derive a formula of the greatest
lower bound on Ricci curvature. We also give a criteria for the ampleness
of a kind of line bundles over general homogeneous toric bundles.
\end{abstract}

\email{yeeyoe@163.com}

\address{DEPARTMENT OF MATHEMATICS, SICHUAN UNIVERSITY, CHENGDU, 610064, CHINA}
\maketitle

\section{Introduction}

The existence of canonical metrics on K\"{a}hler manifolds has been a
central problem in K\"{a}hler geometry. For Fano toric manifolds, Wang
and Zhu \cite{WANGZHU} showed that there always exists K\"{a}hler-Ricci
soliton. This implies that it admits K\"{a}hler-Einstein metric if and
only if the Futaki invariant vanishes. This is also equivalent to
that the barycenter of the associated polytope coincides with the
origin.

A natural generalization of toric manifolds is the homogeneous toric
bundles. Given a generalized flag manifold $G^{\mathbb{C}}/P$ and
a toric manifold $F$ with an action by complex torus $T_{\mathbb{C}}^{m}$,
we can construct a fiber bundle via a homomorphism $\tau:P\rightarrow T_{\mathbb{C}}^{m}$,
\[
M\triangleq G^{\mathbb{C}}\times_{P,\tau}F\overset{\pi}{\rightarrow}G^{\mathbb{C}}/P.
\]
In \cite{PS1,PS2} Podest\`{a} and Spiro determined when $M$ will be
Fano, and in that case showed the existence of K\"{a}hler-Ricci solitons.

To find K\"{a}hler-Einstein metrics on a general Fano manifold $X$, we
usually consider the following equation depending on parameter $t\in[0,1]$,
i.e. continuation method along Aubin's path,
\begin{equation}
Ric(\omega_{\varphi_{t}})=t\omega_{\varphi_{t}}+(1-t)\omega_{0}.\label{eq:continuation method}
\end{equation}
It is solvable when $t$ is close to zero and solutions of the case
$t=1$ give K-E metrics.

When the K-E metrics does not exist, consider
\[
\sup\{T>0\mid(\ref{eq:continuation method})\ \textrm{is\ solvable}\ \textrm{for}\ t\in[0,T]\}.
\]
In \cite{Gabor}, Sz\'{e}kelyhidi showed that this supremum is independent
on $\omega_{0}$ and equals to the greatest lower bounds on Ricci
curvature
\[
R(X)=\sup\{t\geq0\mid\exists\omega\in c_{1}(X)\ \textrm{s.t.}\ Ric(\omega)>t\omega\}.
\]

For toric manifolds, Li \cite{ChiLi} obtained an explicit formula
for $R(X)$ in terms of the associated polytope. In addition, for
the bi-equivariant Fano compactifications of complex reductive groups,
including toric manifolds, Delcroix \cite{Delcroix} obtained a similar
formula.

One aim of this paper is to derive a similar formula for Fano homogeneous
toric bundles. Comparing to the toric case, it turns out the main
difference is that the real-reduced equation of (\ref{eq:continuation method})
has an additional term. But since this term is uniformly bounded,
we can obtain the key estimates by following the way of \cite{WANGZHU}
without essential modifications. With these estimates at hand, the
rest of the proof is similar to \cite{ChiLi}.

Now we state the result, see Section 2 for the notations. Let
\begin{equation}
\triangle_{F}=\{y\in\mathfrak{t}^{*}\mid\left\langle p_{i},y\right\rangle +1\geq0\ \textrm{for}\ i=1,\cdots,N\}\label{eq:weight polytope}
\end{equation}
be the polytope associated to Fano toric manifold $F$. Using the
dual of $d\tau|_{Z(\mathfrak{k})}:Z(\mathfrak{k})\rightarrow\mathfrak{t}$,
we denote
\[
\bigtriangleup_{M}=\left(d\tau\right)^{*}\left(\frac{1}{2\pi}\bigtriangleup_{F}\right)+I_{V}^{\vee}\subset Z(\mathfrak{k})^{*},
\]
where $I_{V}$ is defined by (\ref{eq:I_v}). With respect to the
$-\mathcal{B}$-orthogonal decomposition $\mathfrak{h}=Z(\mathfrak{k})\oplus Z(\mathfrak{k})^{\perp}$.
Let $\left(iH_{\alpha}\right)_{Z}$ be the $Z(\mathfrak{k})$-component
of $iH_{\alpha}$.
\begin{thm}
Let $M$ be a Fano homogeneous toric bundle given by data $(G^{\mathbb{C}},P,F,\tau)$.
Let
\[
P\triangleq\frac{\int_{\bigtriangleup_{M}}\vec{x}\cdot\prod_{\alpha\in R_{\mathfrak{m}}^{+}}\left(iH_{\alpha}\right)_{Z}d\mu}{\int_{\bigtriangleup_{M}}\prod_{\alpha\in R_{\mathfrak{m}}^{+}}\left(iH_{\alpha}\right)_{Z}d\mu}\in\bigtriangleup_{M}
\]
where $\left(iH_{\alpha}\right)_{Z}$ are treated as linear functions
on $Z(\mathfrak{k})^{*}$, $d\mu$ is the Lebesgue measure on the
affine subspace spanned by $\bigtriangleup_{M}$. Then
\begin{equation}
R(M)=\sup\{0\leq t<1\mid\frac{-t}{1-t}P+\frac{1}{1-t}I_{V}^{\vee}\in\bigtriangleup_{M}\}.\label{R(M) formula}
\end{equation}
\end{thm}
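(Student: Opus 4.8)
The plan is to reduce (\ref{eq:continuation method}) to a real Monge--Ampère equation by symmetry, then run the continuity method, importing the a priori estimates of Wang--Zhu \cite{WANGZHU} and the polytope analysis of Li \cite{ChiLi}. Fixing a maximal compact subgroup $K\subset G^{\mathbb C}$, I would first encode a $K$-invariant K\"ahler potential in $c_{1}(M)$ by a smooth strictly convex function $u$ on $Z(\mathfrak k)$ whose gradient maps onto the relative interior of $\triangle_{M}\subset Z(\mathfrak k)^{*}$. Integrating over the flag-manifold directions produces the density $\prod_{\alpha\in R_{\mathfrak m}^{+}}(iH_{\alpha})_{Z}$ in moment coordinates, so that $\nabla u$ pushes the complex Monge--Ampère measure of $\omega_{\varphi}$ forward to $\prod_{\alpha\in R_{\mathfrak m}^{+}}(iH_{\alpha})_{Z}\,d\mu$ on $\triangle_{M}$. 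In these coordinates (\ref{eq:continuation method}) becomes, up to normalization, an equation of the form $\det(D^{2}u_{t})=\exp(-tu_{t}-(1-t)u_{0}+h)$, where $h$ collects the logarithm of the homogeneous density; as stressed in the introduction, $h$ is uniformly bounded together with its derivatives.

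With the equation in this form, openness of the set of solvable $t$ along Aubin's path is standard. The analytic core is the a priori $C^{0}$ bound: following \cite{WANGZHU} and using the uniform boundedness of $h$, I would show that for $t$ in any compact subinterval of $[0,t^{*})$ the normalized convex solutions $u_{t}$ remain uniformly bounded, whence the interior and boundary regularity for real Monge--Ampère equations yields uniform higher-order estimates and hence closedness. Combined with openness, this gives solvability of (\ref{eq:continuation method}) for every $t<t^{*}$, with $t^{*}$ the parameter at which the $C^{0}$ bound first degenerates.

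It then remains to identify $t^{*}$ with the supremum in (\ref{R(M) formula}). Write $x(t)=\frac{-t}{1-t}P+\frac{1}{1-t}I_{V}^{\vee}$. For the upper bound $R(M)\le t^{*}$, I would integrate the reduced equation against the coordinate functions: the first moment of the Monge--Ampère measure equals $\mathrm{Vol}_{w}\cdot P$, where $\mathrm{Vol}_{w}=\int_{\triangle_{M}}\prod_{\alpha\in R_{\mathfrak m}^{+}}(iH_{\alpha})_{Z}\,d\mu$, and combined with an integration by parts of the right-hand side $\exp(-tu_{t}-(1-t)u_{0}+h)$ this exhibits $x(t)$ as the weighted barycenter of a probability measure supported on $\triangle_{M}$; by convexity of $\triangle_{M}$, solvability of (\ref{eq:continuation method}) at $t$ therefore forces $x(t)\in\triangle_{M}$. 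For the reverse inequality I would show that if $x(t)$ lies in the relative interior of $\triangle_{M}$ then the $u_{t}$ cannot escape to infinity in any direction, so the $C^{0}$ bound survives and the path continues; this is exactly Li's argument bounding the asymptotic slopes of $u_{t}$ by the position of $x(t)$ relative to $\partial\triangle_{M}$.

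The main obstacle I expect is the sharp $C^{0}$ estimate near $t^{*}$, namely proving that $u_{t}$ stays bounded precisely while $x(t)\in\mathrm{int}\,\triangle_{M}$ and degenerates once $x(t)$ meets $\partial\triangle_{M}$, together with making the barycenter identity fully rigorous. The delicate point is that the density $\prod_{\alpha\in R_{\mathfrak m}^{+}}(iH_{\alpha})_{Z}$ vanishes along part of $\partial\triangle_{M}$, so the boundary contributions in the integration by parts and the positivity needed for convexity must be controlled carefully; the uniform boundedness of $h$ is exactly what lets the Wang--Zhu machinery be imported without essential modification.
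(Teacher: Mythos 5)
Your overall route coincides with the paper's: reduce by $G\times T^{m}$-invariance to the real Monge--Amp\`ere equation $\Gamma(x,t)\cdot\det(u_{t,ij})=e^{-(1-t)u_{0}-tu_{t}}$ on the open torus orbit of the fiber (your $h$ is $-\log\Gamma$, the flag-direction density $\prod_{\alpha\in R_{\mathfrak{m}}^{+}}\sqrt{-1}\alpha(Z_{\omega_{\varphi_{t}}})$), import the Wang--Zhu estimates (Proposition \ref{prop:WZ estimate}), derive the barycenter identity (\ref{eq:barycenter identity}) from the change of variables $y=\nabla u_{t}$ together with $\int_{\mathbb{R}^{m}}\nabla w_{t}\,e^{-w_{t}}dx=0$ to get $R(M)\le R_{\triangle}$, and close with Li's affine-function argument along a sequence $t_{k}\to R(M)^{-}$ with $\left|x_{t_{k}}\right|\to\infty$. (The paper invokes Sz\'ekelyhidi's theorem for solvability on $[0,R(M))$ rather than re-running openness/closedness, but that difference is cosmetic.)

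There is, however, a genuine gap at the one step that is actually new in this setting: you assume rather than prove the uniform two-sided bound on $\Gamma$, citing only ``as stressed in the introduction, $h$ is uniformly bounded,'' and your closing paragraph shows the justification is missing, since you worry that the density $\prod_{\alpha\in R_{\mathfrak{m}}^{+}}(iH_{\alpha})_{Z}$ ``vanishes along part of $\partial\triangle_{M}$.'' In the Fano case it does not, and proving this is precisely where the paper's Theorem 2 enters: since $K_{M}^{-1}$ is ample, the ampleness criterion (resting on the curvature computation of Theorem \ref{thm:curva property} and (\ref{eq:boundness reason})) forces $Z_{\omega_{\varphi_{t}}}(F_{o})\subset\mathcal{C}$, i.e. $\sqrt{-1}\alpha(Z_{\omega_{\varphi_{t}}})>0$ for every $\alpha\in R_{\mathfrak{m}}^{+}$; because $F_{o}$ is compact and the image $Z_{\omega_{\varphi_{t}}}(F_{o})$ is independent of $t$, compactness yields $c\le\Gamma(x,t)\le C$ with $c>0$ uniformly for $t<R(M)$. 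Without this, your plan stalls exactly where you predicted: the lower bound $c\le\Gamma$ is what the Wang--Zhu $C^{0}$ machinery needs, and the boundary-contribution difficulty you flag in the barycenter identity dissolves once positivity on all of $\triangle_{M}$ is known. Two smaller inaccuracies: the claim that $h$ is bounded ``together with its derivatives'' is both unjustified ($h$ depends on $\nabla u_{t}$, so its $x$-derivatives involve $D^{2}u_{t}$, which is not a priori controlled) and unnecessary, since only the $C^{0}$ bound is used; and the convex potential lives on $\mathbb{R}^{m}\cong\left(\ker d\tau|_{Z(\mathfrak{k})}\right)^{\perp}$, the fiber-torus directions, not on all of $Z(\mathfrak{k})$ --- $\triangle_{M}$ spans only an $m$-dimensional affine subspace of $Z(\mathfrak{k})^{*}$, which is why $d\mu$ in the statement is Lebesgue measure on that affine span.
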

Another aim is to determine the ample line bundles over $M$. In \cite{PS1}
they gave a criteria for anti-canonical bundle. We only consider line
bundles in the following form
\[
\mathcal{L}=\left(G^{\mathbb{C}}\times_{P,\tau}L_{F}\right)\otimes\pi^{*}L_{\chi},
\]
where $L_{F}$ is a line bundle over $F$ with a lifted torus action,
$L_{\chi}$ is a line bundle over $G^{\mathbb{C}}/P$ given by a character
$\chi:P\rightarrow\mathbb{C}^{*}$. Let $\triangle_{L_{F}}$ be the
weight polytope associated to $L_{F}$, namely the convex hull of
the weights of action on fibers over the fixed points.
\begin{thm}
The line bundle $\mathcal{L}$ is ample if and only if $L_{F}$ is
ample and
\begin{equation}
\left(d\tau\right)^{*}\left(\frac{1}{2\pi}\triangle_{L_{F}}\right)+\frac{i}{2\pi}d\chi|_{Z(\mathfrak{k})}\subset\mathcal{C}^{\vee}\label{eq:ample condition}
\end{equation}
where $\mathcal{C}^{\vee}\subset Z(\mathfrak{k})^{*}$ is the $-\mathcal{B}$-dual
cone of the Weyl chamber $\mathcal{C}$.
\end{thm}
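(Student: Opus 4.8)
The plan is to prove the two directions separately, using that ampleness restricts to closed subvarieties (for necessity) and that strict positivity of Chern curvature implies ampleness via Kodaira's criterion (for sufficiency).

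\emph{Necessity.} First I would restrict $\mathcal{L}$ to a single fibre $F\subset M$; since $\mathcal{L}|_{F}\cong L_{F}$ and ampleness passes to subvarieties, $L_{F}$ must be ample. Next, for each $T_{\mathbb{C}}^{m}$-fixed point $p\in F$ the assignment $gP\mapsto[g,p]$ is a well-defined holomorphic section $\sigma_{p}\colon G^{\mathbb{C}}/P\to M$, well defined precisely because $\tau(P)\subset T_{\mathbb{C}}^{m}$ fixes $p$. I would then identify $\mathcal{L}|_{\sigma_{p}(G^{\mathbb{C}}/P)}$ with the homogeneous line bundle on $G^{\mathbb{C}}/P$ attached to the character $\chi\cdot(v_{p}\circ\tau)$, where $v_{p}\in\mathfrak{t}^{*}$ is the weight of the $T_{\mathbb{C}}^{m}$-action on the fibre $(L_{F})_{p}$; these $v_{p}$ are exactly the vertices of $\triangle_{L_{F}}$. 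By the standard positivity criterion for line bundles on flag manifolds, ampleness of $\mathcal{L}$ forces each restricted weight $\frac{i}{2\pi}d\chi|_{Z(\mathfrak{k})}+(d\tau)^{*}(\tfrac{1}{2\pi}v_{p})$ to lie in the interior of $\mathcal{C}^{\vee}$, i.e.\ to pair positively with every $(iH_{\alpha})_{Z}$, $\alpha\in R_{\mathfrak{m}}^{+}$. As $\mathcal{C}^{\vee}$ is convex and $\triangle_{L_{F}}=\operatorname{conv}\{v_{p}\}$, this is exactly \eqref{eq:ample condition}.

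\emph{Sufficiency.} For the converse I would construct a $G$-invariant Hermitian metric on $\mathcal{L}$ and show that its curvature $\Theta$ is positive. On the open dense orbit I would use the Podest\`{a}--Spiro description of invariant data: the $L_{F}$-factor is encoded by a convex potential $u$ on $\mathfrak{t}$ with associated moment polytope $\triangle_{L_{F}}$, and $\pi^{*}L_{\chi}$ by the homogeneous metric of $\chi$. Using the $-\mathcal{B}$-orthogonal splitting $\mathfrak{h}=Z(\mathfrak{k})\oplus Z(\mathfrak{k})^{\perp}$ and the root decomposition of $\mathfrak{m}$, I would show that in the resulting invariant frame $\Theta$ is block diagonal: a \emph{vertical} block equal to $\operatorname{Hess}u$, positive definite because $L_{F}$ is ample, and a \emph{horizontal} block splitting along $R_{\mathfrak{m}}^{+}$, the $\alpha$-eigenvalue being proportional to $\langle\,\frac{i}{2\pi}d\chi|_{Z(\mathfrak{k})}+(d\tau)^{*}(\tfrac{1}{2\pi}y),\,(iH_{\alpha})_{Z}\,\rangle$, where $y\in\triangle_{L_{F}}$ is the fibrewise moment value at the chosen point. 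Positivity of $\Theta$ thus amounts to this pairing being positive for all $\alpha$ and all $y\in\triangle_{L_{F}}$; by convexity and compactness of $\triangle_{L_{F}}$ this holds throughout once it holds at the vertices, which is \eqref{eq:ample condition}. Kodaira's criterion then yields ampleness of $\mathcal{L}$.

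The \emph{main obstacle} is the sufficiency curvature computation: establishing the block-diagonal form of $\Theta$ and pinning down the horizontal eigenvalues as the pairings with $(iH_{\alpha})_{Z}$, including the shift of the base weight by the fibre moment $(d\tau)^{*}y$ with the correct normalization. This is where invariance under $G$ and the compatibility of $\tau$ with $P$ are essential—one must check in particular that the mixed horizontal--vertical terms vanish. The argument parallels the anti-canonical computation of Podest\`{a}--Spiro in \cite{PS1}, now carried out for a general $L_{F}$ and character $\chi$, and the reduction of the pointwise positivity to the vertex condition is the bridge that matches it with the necessity direction.
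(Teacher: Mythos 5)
Your proposal is correct, and it splits against the paper in an interesting way. The \emph{sufficiency} half follows essentially the paper's own route: there too one takes $h_{F}$ with positive curvature, forms $h_{1}\otimes\pi^{*}h_{\chi}$, reduces to $F_{o}$ by $G$-invariance, kills the mixed $TF_{o}$--$\hat{\mathfrak{m}}$ terms via Proposition \ref{prop:new observation} (see (\ref{eq:Curv vanish 2})), and computes the horizontal block with the Koszul-type formula (\ref{eq:Koszul}), obtaining $\Omega(\hat{F}_{\alpha},J\hat{F}_{\alpha})=\sqrt{-1}\,\alpha\bigl(\sum_{i}\mu_{i}Z_{i}+I_{\chi}\bigr)$ in Theorem \ref{thm:curva property} --- exactly the pairing of the moment-shifted weight with $(iH_{\alpha})_{Z}$ that you flagged as the main obstacle; the paper resolves it precisely as you predict. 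Your \emph{necessity} half is genuinely different: the paper stays analytic --- it averages a positively curved metric to a $G$-invariant one, writes $\Omega+i\partial\bar{\partial}\phi>0$, invokes Lemma 3.1(b) of \cite{PS1} to get the algebraic representation $\sum_{i}(\mu_{i}-\phi_{x_{i}})Z_{i}+I_{\chi}$, and concludes because the $\phi_{x_{i}}$-correction does not change the moment image --- whereas you restrict $\mathcal{L}$ to the holomorphic sections $\sigma_{p}$ through the $T_{\mathbb{C}}^{m}$-fixed points (well defined since $\tau(P)$ fixes $p$), identify the restriction with the homogeneous bundle of the character $\chi\cdot(v_{p}\circ\tau)$, and apply the classical ampleness criterion on $G^{\mathbb{C}}/P$, passing from vertices to the whole polytope by convexity of $\mathcal{C}^{\vee}$ and the fact (recorded in the paper) that $\triangle_{L_{F}}$ is the convex hull of the fixed-point weights. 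Your route is more algebro-geometric and avoids both the averaging step and the moment-image-invariance argument; the paper's route buys uniformity, since the single algebraic-representation computation (\ref{eq:boundness reason}) serves both directions and is reused later in the proof of Theorem 1. One small point to make explicit when you write up sufficiency: the convex-potential description lives only on the open orbit, so positivity along the toric boundary of $F_{o}$ should be obtained by noting that the moment map --- and hence your horizontal pairing --- is globally defined on $F$ with image the full closed polytope $\frac{1}{2\pi}\triangle_{L_{F}}$, which is exactly how the paper's formulation handles it.
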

The main difficult to prove this is how to compute the curvature of
$\mathcal{L}$, we use a Koszul type formula (\ref{eq:Koszul}).

\section{Toric Bundles over Flag Manifolds}

In this section we follow \cite{PS1}. The only new observation is
Proposition \ref{prop:new observation}.

\subsection{Generalized flag manifolds}

Let $G$ be a compact semi-simple Lie group, $S$ be a subtorus. Take
$T$ is a maximal torus containing $S$. Let $K=C_{G}(S)$, then $K\supset T$.
Denote $\mathfrak{g},\mathfrak{k},\mathfrak{h}$ the Lie algebras
of $G,K,T$. Let $\mathfrak{m}$ be the orthogonal complement of $\mathfrak{k}$
with respect to the Killing form $\mathcal{B}$, which is a symmetric
and negative definite bilinear form on $\mathfrak{g}$. Then we have
$\mathfrak{g}=\mathfrak{k}\oplus\mathfrak{m}$ and $[\mathfrak{k},\mathfrak{m}]\subset\mathfrak{m}$.

Denote $G^{\mathbb{C}}$ the complexification of $G$. Since $T^{\mathbb{C}}$
is also a maximal torus of $G^{\mathbb{C}}$, we have the root decomposition
\[
\mathfrak{g}^{\mathbb{C}}=\mathfrak{h}^{\mathbb{C}}\oplus\sum_{\alpha\in R}\mathbb{C}E_{\alpha},
\]
where $R\subset i\mathfrak{h}^{*}$ is the root system and $\{E_{\alpha}\}$
are root vectors such that $H_{\alpha}=[E_{\alpha},E_{-\alpha}]$
is the $\mathcal{B}$-dual of $\alpha$. Let
\[
F_{\alpha}=\frac{1}{\sqrt{2}}(E_{\alpha}-E_{-\alpha}),\ G_{\alpha}=\frac{1}{\sqrt{2}}(E_{\alpha}+E_{-\alpha}),
\]
then $\{F_{\alpha},G_{\alpha}\}$ $\mathbb{R}$-spans $\left(\mathbb{C}E_{\alpha}\oplus\mathbb{C}E_{-\alpha}\right)\cap\mathfrak{g}$
and $[F_{\alpha},G_{\alpha}]=iH_{\alpha}$.

$K$ corresponds a partition of roots $R=R_{\mathfrak{k}}\sqcup R_{\mathfrak{m}}$
such that
\[
\mathfrak{k}^{\mathbb{C}}=\mathfrak{h}^{\mathbb{C}}\oplus\sum_{\alpha\in R_{\mathfrak{k}}}\mathbb{C}E_{\alpha},\ \mathfrak{m}^{\mathbb{C}}=\sum_{\alpha\in R_{\mathfrak{m}}}\mathbb{C}E_{\alpha}.
\]
Denote $Z(\mathfrak{k})\subset\mathfrak{h}$ the center of $\mathfrak{k}$.
Its orthogonal complement $Z(\mathfrak{k})^{\perp}$ in $\mathfrak{h}$
is spanned by $\{iH_{\alpha}\}_{\alpha\in R_{\mathfrak{k}}}$.

From these data we have a generalized flag manifold $V=G/K$. There
is a 1-to-1 correspondence between $G$-invariant complex structures
$J_{V}$ and the partitions $R_{\mathfrak{m}}=R_{\mathfrak{m}}^{+}\sqcup R_{\mathfrak{m}}^{-}$
satisfying
\begin{enumerate}
\item $R_{\mathfrak{m}}^{+}=-R_{\mathfrak{m}}^{-}$,
\item If $\alpha\in R_{\mathfrak{k}}\sqcup R_{\mathfrak{m}}^{+}$, $\beta\in R_{\mathfrak{m}}^{+}$
and $\alpha+\beta\in R$, then $\alpha+\beta\in R_{\mathfrak{m}}^{+}$.
\end{enumerate}
Identify $T_{eK}V$ with $\mathfrak{m}$, then the decomposition of
$T_{eK}V\otimes\mathbb{C}$ induced by $J_{V}$ is
\begin{equation}
\mathfrak{m}^{(1,0)}=\sum_{\alpha\in R_{\mathfrak{m}}^{+}}\mathbb{C}E_{\alpha},\ \mathfrak{m}^{(0,1)}=\sum_{\alpha\in R_{\mathfrak{m}}^{-}}\mathbb{C}E_{\alpha}.\label{eq:complex structure}
\end{equation}
It also gives a parabolic subgroup $P\subset G^{\mathbb{C}}$ with
Lie algebra $\mathfrak{p}=\mathfrak{k}^{\mathbb{C}}+\mathfrak{m}^{(0,1)}$.
Then we have a complex model $V=G^{\mathbb{C}}/P.$ Now $V$ is a
complex manifold with a holomorphic $G^{\mathbb{C}}$-action.

Let $\omega$ be a $G$-invariant K\"{a}hler metric on $V$, there exists
a unique $I_{\omega}\in Z(\mathfrak{k})$ such that
\begin{equation}
\omega(\hat{X},\hat{Y})|_{eK}=\mathcal{B}(I_{\omega},[X,Y])\label{eq:alge repre on base}
\end{equation}
for all $X$, $Y\in\mathfrak{g}$, where $\hat{X}$ is the induced
vector field by $G$-action. In order that $\omega$ is positive,
we need $I_{\omega}$ belongs to Weyl chamber
\[
\mathcal{C}=\{I\in Z(\mathfrak{k})\mid i\alpha(I)>0\ \textrm{for\ all}\ \alpha\in R_{\mathfrak{m}}^{+}\}.
\]
If we take
\begin{equation}
I_{\omega}=I_{V}\triangleq-\frac{i}{2\pi}\sum_{\alpha\in R_{\mathfrak{m}}^{+}}H_{\alpha},\label{eq:I_v}
\end{equation}
then $\omega$ is the $G$-invariant K\"{a}hler-Einstein metric.

\subsection{Toric bundles}

Let $F$ be a toric manifold with an action by complex torus $T_{\mathbb{C}}^{m}$.
Denote $T^{m}$ the real torus and $\mathfrak{t}$ its Lie algebra.

Let $\tau:P\rightarrow T_{\mathbb{C}}^{m}$ be a surjective homomorphism
which takes $K$ into $T^{m}$. Since $d\tau([X,Y])=0$ for all $X,Y\in\mathfrak{p}$,
we see that $d\tau|_{Z(\mathfrak{k})}:Z(\mathfrak{k})\rightarrow\mathfrak{t}$
is surjective. Then choose a $-\mathcal{B}$-orthonormal basis $\{Z_{i}\}_{i=1}^{m}$
for $\left(\ker d\tau|_{Z(\mathfrak{k})}\right)^{\perp}$.

The toric bundles have the following two models,
\[
M\triangleq G\times_{K,\tau}F\cong G^{\mathbb{C}}\times_{P,\tau}F.
\]
Where $G\times_{K,\tau}F=G\times F/\{(g,x)\sim(gk^{-1},\tau(k).x)\mid k\in K\}$.

Denote $\pi$ the natural projection $M\rightarrow V$ and $F_{o}\subset M$
the fiber over $eK$, which can be identified with $F$ via $x\mapsto[e,x]$
for $x\in F$. By the second model $M$ is a complex manifold. Note
that $TM|_{F_{o}}=TF_{o}\oplus\hat{\mathfrak{m}}$, then the complex
structure $J$ is the direct sum of the ones on $TF$ and $\mathfrak{m}$.
Namely, $J\hat{A}=\widehat{J_{V}A}$ for all $A\in\mathfrak{m}$.

Moreover, $M$ has a holomorphic $G^{\mathbb{C}}\times T_{\mathbb{C}}^{m}$-action
defined by
\[
(h,z).[g,x]=[hg,z.x]
\]
where $g,h\in G^{\mathbb{C}}$, $z\in T_{\mathbb{C}}^{m}$ and $x\in F$.
In the following we identify subgroups $G^{\mathbb{C}}\times\{e\}$,
$\{e\}\times T_{\mathbb{C}}^{m}$ with $G^{\mathbb{C}}$, $T_{\mathbb{C}}^{m}$.

Note that $F_{o}$ is stabilized by $P$ and $T_{\mathbb{C}}^{m}$.
Actually, these two actions on $F_{o}$ are equivalent with each other
through $\tau$. Since
\begin{equation}
(p,e).[e,x]=[p,x]=[e,\tau(p).x]=(e,\tau(p)).[e,x],\label{eq:equi of action}
\end{equation}
for all $p\in P$, $x\in F$.

\subsection{Algebraic representations}

Algebraic representation is a way to describe the invariant forms
on $G$-manifolds.

Let $\phi$ be a $G$-invariant closed two-form on $M$. According
to \cite{PS1}, there exists a map $Z_{\phi}:M\rightarrow\mathfrak{g}$,
called the algebraic representation of $\phi$, such that

(1) $Z_{\phi}(g.x)=\textrm{Ad}(g).Z_{\phi}(x)$ for all $g\in G$,
$x\in M$.

(2) For all $X,Y\in\mathfrak{g}$,
\begin{equation}
\phi(\hat{X},\hat{Y})=\mathcal{B}(Z_{\phi},[X,Y]).\label{eq:alge repre}
\end{equation}
Moreover, if $\phi$ is non-degenerate, i.e. a symplectic form, then
$Z_{\phi}^{\vee}=-\mathcal{B}(Z_{\phi},\cdot)$ is the moment map
for the $G$-action.

The following Proposition tells us when $\phi$ is also invariant
under $T^{m}$-action.
\begin{prop}
\label{prop:new observation}Let $\phi$ be a $G$-invariant and $J$-invariant
closed two-form on $M$, then $\phi$ is invariant under $T^{m}$-action
if and only if $Z_{\phi}|_{F_{o}}$ takes value in $Z(\mathfrak{k})$.
\end{prop}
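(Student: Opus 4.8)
The plan is to exploit the equivalence of the $P$-action and the $T^m$-action on the fiber $F_o$ recorded in equation (\ref{eq:equi of action}), and to translate the $T^m$-invariance of $\phi$ into an infinitesimal statement about the algebraic representation $Z_\phi$. First I would set up the infinitesimal versions of the two group actions: for $A \in \mathfrak{t}$ let $\tilde{A}$ denote the vector field on $M$ induced by the $T^m$-action, and for $X \in \mathfrak{g}$ let $\hat{X}$ denote the vector field induced by the $G$-action. The content of (\ref{eq:equi of action}) is that along $F_o$ the two actions agree through $\tau$; differentiating, for $X \in Z(\mathfrak{k})$ the field $\widehat{X}$ restricted to $F_o$ equals $\widetilde{d\tau(X)}$ up to the tangent-to-fiber ambiguity, and conversely every $\tilde{A}$ with $A \in \mathfrak{t}$ arises this way since $d\tau|_{Z(\mathfrak{k})}$ is surjective. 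So $T^m$-invariance of $\phi$ is equivalent to $\mathcal{L}_{\tilde{A}}\phi = 0$ for all $A \in \mathfrak{t}$, and it suffices to test this at points of $F_o$ since both $\phi$ and the flows are $G$-equivariant and $G^{\mathbb{C}} \cdot F_o$ is open dense.

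Next I would compute $\mathcal{L}_{\hat X}\phi$ for $X \in Z(\mathfrak{k})$. Since $\phi$ is closed, Cartan's formula gives $\mathcal{L}_{\hat X}\phi = d\,\iota_{\hat X}\phi$. The natural strategy is to express $\iota_{\hat X}\phi$ in terms of $Z_\phi$. Because $\phi$ is $G$-invariant, the function $Z_\phi^\vee \cdot X = -\mathcal{B}(Z_\phi, X)$ behaves like a Hamiltonian for $\hat X$, so that $\iota_{\hat X}\phi = -d\,\mathcal{B}(Z_\phi, X)$ wherever the equivariance property (1) lets one identify the induced field; here the key computation is to differentiate property (1), $Z_\phi(g.x) = \mathrm{Ad}(g).Z_\phi(x)$, along the flow of $\hat X$ to get $dZ_\phi(\hat X) = \mathrm{ad}(X) Z_\phi = [X, Z_\phi]$, and to combine this with the defining relation (\ref{eq:alge repre}). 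The upshot I am aiming for is that $\mathcal{L}_{\hat X}\phi = 0$ automatically by $G$-invariance, which is not quite what is wanted; so the genuine input is that $\tilde A$ and $\widehat{X}$ differ by a vertical (fiber-tangent) field, and the $J$-invariance of $\phi$ together with the product structure $TM|_{F_o} = TF_o \oplus \hat{\mathfrak m}$ must be used to control the vertical part.

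Concretely, I expect the cleanest route is pointwise at $eK$. Evaluate $\phi$ on pairs of vectors in $T_{[e,x]}M$ split according to $TF_o \oplus \hat{\mathfrak m}$. The $T^m$-action moves points only within fibers, so $T^m$-invariance is a condition relating the fiber-fiber, fiber-horizontal, and horizontal-horizontal blocks of $\phi$ under the fiber flow. Using $\mathcal{B}(Z_\phi, [X,Y]) = \phi(\hat X, \hat Y)$ and the decomposition $\mathfrak{h} = Z(\mathfrak{k}) \oplus Z(\mathfrak{k})^\perp$, I would show that the horizontal-horizontal block is insensitive to the $Z(\mathfrak{k})^\perp$-component of $Z_\phi|_{F_o}$ automatically, while the mixed fiber-horizontal block is governed precisely by how $Z_\phi|_{F_o}$ pairs against $[\mathfrak{t}\text{-direction}, \mathfrak m]$; requiring this mixed block to be invariant forces $\mathcal{B}(Z_\phi|_{F_o}, \cdot)$ to kill the relevant brackets, which is equivalent to $Z_\phi|_{F_o} \in Z(\mathfrak{k})$. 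The forward direction (invariance $\Rightarrow$ value in $Z(\mathfrak{k})$) and the converse then both follow from this pairing identity.

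The main obstacle, I anticipate, is bookkeeping the relation between the $G$-induced field $\hat X$ and the $T^m$-induced field $\tilde A$ along $F_o$: because $\widehat{X}$ for $X \in Z(\mathfrak{k})$ contains both a vertical piece (matching $\widetilde{d\tau(X)}$) and a horizontal piece, one must carefully subtract off the horizontal contribution, and this is exactly where $G$-invariance of $\phi$ (which kills the horizontal motion) interacts with the condition one is trying to extract. Handling the non-closedness subtleties is avoided since $\phi$ is assumed closed, so Cartan's formula applies cleanly; the real care is in the linear algebra of the moment-type identity $\mathcal{B}(Z_\phi|_{F_o}, [\,\cdot\,,\,\cdot\,])$ restricted to the appropriate subspaces of $\mathfrak{g}$, which is where I would spend the bulk of the effort.
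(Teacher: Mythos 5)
Your plan has a genuine gap at its core: the pointwise linear algebra you propose for the necessity direction does not characterize $Z_{\phi}|_{F_{o}}\in Z(\mathfrak{k})$. You claim that invariance of the mixed fiber--horizontal block ``forces $\mathcal{B}(Z_{\phi}|_{F_{o}},\cdot)$ to kill the relevant brackets, which is equivalent to $Z_{\phi}|_{F_{o}}\in Z(\mathfrak{k})$.'' But the mixed block is $\phi(\hat{Z}_{j},\hat{A})=\mathcal{B}(Z_{\phi},[Z_{j},A])$ with $[Z_{j},\mathfrak{m}]\subset\mathfrak{m}$, and since $\mathcal{B}(\mathfrak{k},\mathfrak{m})=0$, \emph{every} value of $Z_{\phi}$ in $\mathfrak{k}$ --- not just in $Z(\mathfrak{k})$ --- annihilates these brackets; more generally the condition only constrains the $\mathfrak{m}$-component of $Z_{\phi}$ against $\sum_{j}[Z_{j},\mathfrak{m}]$, which can be a proper subspace of $\mathfrak{m}$ when $\ker d\tau|_{Z(\mathfrak{k})}\neq0$. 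So this pairing condition is strictly weaker than the conclusion and cannot distinguish $Z_{\phi}(x)\in Z(\mathfrak{k})$ from, say, an arbitrary value in $\mathfrak{h}$. Your companion claim that the horizontal--horizontal block ``is insensitive to the $Z(\mathfrak{k})^{\perp}$-component'' is also false: $[\mathfrak{m},\mathfrak{m}]$ contains $iH_{\alpha}=[F_{\alpha},G_{\alpha}]$ for $\alpha\in R_{\mathfrak{m}}$, whose $Z(\mathfrak{k})^{\perp}$-components are generally nonzero. (A smaller confusion: on $F_{o}$ itself, $\hat{X}$ for $X\in\mathfrak{k}$ is purely vertical and equals $\widetilde{d\tau(X)}$ exactly --- there is no horizontal piece and no ambiguity; they differ only off $F_{o}$.)

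The missing idea is that the paper's argument is not pointwise: it compares $Z_{\phi}$ at \emph{different} points of one $T^{m}$-orbit. From invariance of $\phi$ on pairs of $G$-fields (using $t_{*}\hat{X}=\hat{X}$, as the actions commute) one gets $\mathcal{B}(Z_{\phi}(x),[X,Y])=\mathcal{B}(Z_{\phi}(t.x),[X,Y])$ for all $X,Y\in\mathfrak{g}$, and then semisimplicity, $[\mathfrak{g},\mathfrak{g}]=\mathfrak{g}$ with $\mathcal{B}$ nondegenerate, yields $Z_{\phi}(t.x)=Z_{\phi}(x)$; next, the action identification (\ref{eq:equi of action}) rewrites $t.x=k.x$ for any $k\in K$ with $\tau(k)=t$, so equivariance $Z_{\phi}(k.x)=\textrm{Ad}(k).Z_{\phi}(x)$ forces $\textrm{Ad}(K)$-fixedness, i.e. $[\mathfrak{k},Z_{\phi}(x)]=0$, and the root-space argument ($[H,E_{\alpha}]=\alpha(H)E_{\alpha}$) identifies the centralizer of $\mathfrak{k}$ with $Z(\mathfrak{k})$. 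Your infinitesimal fragment $dZ_{\phi}(\hat{X})=[X,Z_{\phi}]$ is the derivative of the second step, but you never extract the first input --- that $T^{m}$-invariance makes $Z_{\phi}$ constant along orbits --- and your Cartan/Hamiltonian detour dead-ends, as you yourself observe ($\mathcal{L}_{\hat{X}}\phi=0$ is automatic and carries no information). Finally, in the sufficiency direction you leave the real issue vague: invariance on $\hat{\mathfrak{g}}$-pairs misses the directions $J\hat{Z}_{j}\subset TF_{o}$, and the paper closes exactly this hole by proving $\phi|_{F_{o}}(TF_{o},\hat{\mathfrak{m}})=0$ (equation (\ref{eq:curv vanish})), combining $Z_{\phi}|_{F_{o}}\in Z(\mathfrak{k})$ with $[\mathfrak{k},\mathfrak{m}]\subset\mathfrak{m}$, the $J$-invariance of $\phi$, and $J\hat{\mathfrak{m}}=\hat{\mathfrak{m}}$; this step needs to be made explicit, not merely anticipated as ``bookkeeping.''
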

\begin{proof}
Suppose that $\phi$ is $T^{m}$-invariant. Since $G$-action commutes
with $T^{m}$-action, we have $t_{*}\hat{X}=\hat{X}$ for all $t\in T^{m}$
and $X\in\mathfrak{g}$. Then by the property of $Z_{\phi}$,
\[
\mathcal{B}(Z_{\phi}(x),[X,Y])=\phi|_{x}(\hat{X},\hat{Y})=\phi|_{t.x}(t_{*}\hat{X},t_{*}\hat{Y})=\mathcal{B}(Z_{\phi}(t.x),[X,Y])
\]
for all $X,Y\in\mathfrak{g}$, $t\in T^{m}$ and $x\in M$. Since
$[\mathfrak{g},\mathfrak{g}]=\mathfrak{g}$, it follows that $Z_{\phi}(x)=Z_{\phi}(t.x)$.

In the following we assume $x\in F_{o}$. Take $k\in K$ such that
$\tau(k)=t$. By the equivalence of actions on $F_{o}$ (\ref{eq:equi of action}),
\[
Z_{\phi}(t.x)=Z_{\phi}(k.x)=\textrm{Ad}(k).Z_{\phi}(x).
\]
Thus $\textrm{Ad}(k).Z_{\phi}(x)=Z_{\phi}(x)$ for all $k\in K$.
It turns out $[\mathfrak{k},Z_{\phi}(x)]=0$. From the fact $[H,E_{\alpha}]=\alpha(H)E_{\alpha}$
for all $H\in\mathfrak{h}$, we can deduce that $Z_{\phi}(x)\in\mathfrak{k}$.
Thus $Z_{\phi}(x)\in Z(\mathfrak{k})$ as desired.

Conversely, tracing back the above arguments, we know that
\begin{equation}
\phi|_{x}(\hat{X},\hat{Y})=\phi|_{t.x}(t_{*}\hat{X},t_{*}\hat{Y})\label{eq:T-invariant}
\end{equation}
for all $X,Y\in\mathfrak{g}$, $t\in T^{m}$ and $x\in F_{o}$. On
the other hand, since $[\mathfrak{k},\mathfrak{m}]\subset\mathfrak{m}$
we have
\[
\phi|_{F_{o}}(\hat{Z}_{j},\hat{\mathfrak{m}})=\mathcal{B}(Z_{\phi}|_{F_{o}},[Z_{j},\mathfrak{m}])=0.
\]
Moreover, since $J\hat{\mathfrak{m}}=\hat{\mathfrak{m}}$ on $F_{o}$
and $\phi$ is $J$-invariant, we also have
\[
\phi|_{F_{o}}(J\hat{Z}_{j},\hat{\mathfrak{m}})=-\phi|_{F_{o}}(\hat{Z}_{j},J\hat{\mathfrak{m}})=0.
\]
Thus we have
\begin{equation}
\phi|_{F_{o}}(TF_{o},\hat{\mathfrak{m}})=0,\label{eq:curv vanish}
\end{equation}
since that $TF_{o}$ is spanned by $\{\hat{Z}_{j},J\hat{Z}_{j}\}_{j=1}^{m}$.
Then together with (\ref{eq:T-invariant}), it implies that $t^{*}\left(\phi|_{t.x}\right)=\phi|_{x}$
for all $t\in T^{m}$, $x\in F_{o}$. By using the $G$-action, we
know this holds for all $x\in M$.
\end{proof}

\section{Ampleness of Line Bundles}

In this section we discuss the ampleness of line bundles over $M$.

Let $L_{F}$ be an ample line bundle over $F$ with a lifted torus
action. Take a $T^{m}$-invariant metric $h_{F}$ on $L_{F}$, its
curvature is denoted by $0<\omega_{F}\in c_{1}(L_{F})$. Then $h_{F}$
together with the lifted action induce a moment map. In fact, let
$U\cong T_{\mathbb{C}}^{m}$ be the open orbit and $s(z)$ be an equivariant
section on $U$. Let $\varphi=-\log\left|s\right|_{h_{F}}^{2}$ which
is $T^{m}$-invariant. The basis $\{d\tau(Z_{k})\}_{k=1}^{m}$ of
$\mathfrak{t}$ gives logarithmic coordinates $\{x_{k}+i\theta_{k}\}$
on $U$. Then the moment map is given by
\begin{equation}
\mu=\sum_{k}\mu_{k}d\tau(Z_{k})^{*}:F\rightarrow\mathfrak{t}^{*},\ \mu_{k}=-\frac{1}{4\pi}\frac{\partial\varphi}{\partial x_{k}}\label{eq:moment map for toric}
\end{equation}
where $\{d\tau(Z_{k})^{*}\}$ is the dual basis. Denote $\frac{1}{2\pi}\triangle_{L_{F}}$
the image of $\mu$, which does not depend on $h_{F}$. Actually,
$\triangle_{L_{F}}$ is the convex hull of the weights of action on
fibers over the fixed points.

Let $\chi:P\rightarrow\mathbb{C}^{*}$ be a character. It gives a
line bundle
\[
L_{\chi}\triangleq G^{\mathbb{C}}\times_{P,\chi}\mathbb{C}\cong G\times_{K,\chi}\mathbb{C}
\]
 over $V$. Let $h_{\chi}$ be a $G$-invariant metric on $L_{\chi}$
defined by
\[
\left|[g,\lambda]\right|_{h_{\chi}}^{2}=\left|\lambda\right|^{2}
\]
where $g\in G$. Denote $\omega_{\chi}$ the induced curvature form.
Let $I_{\chi}\in Z(\mathfrak{k})$ such that
\[
I_{\chi}^{\vee}=-\mathcal{B}(I_{\chi},\cdot)=\frac{i}{2\pi}d\chi|_{Z(\mathfrak{k})},
\]
then we can check that $\omega_{\chi}(\hat{X},\hat{Y})|_{eK}=\mathcal{B}(I_{\chi},[X,Y]))$
for all $X,Y\in\mathfrak{g}$.

Consider the following holomorphic line bundle over $M$,
\begin{equation}
\mathcal{L}=\left(G^{\mathbb{C}}\times_{P,\tau}L_{F}\right)\otimes\pi^{*}L_{\chi}.\label{eq:line bundle}
\end{equation}
Note that its isomorphism class depends on the lifted torus action
on $L_{F}$.

It is easy to see that $\mathcal{L}$ is isomorphic to $G^{\mathbb{C}}\times_{P,\tau,\chi}L_{F}$,
where $P$ acts on $L_{F}$ by
\[
p.s=\chi(p)\left(\tau(p).s\right).
\]
Use this model we define a $G^{\mathbb{C}}\times T_{\mathbb{C}}^{m}$-action
on $\mathcal{L}$ by $(h,z).[g,s]=[hg,z.s]$, where $g,h\in G^{\mathbb{C}}$,
$s\in L_{F}$.

Now let us first consider $\mathcal{L}_{1}\triangleq G^{\mathbb{C}}\times_{P,\tau}L_{F}\cong G\times_{K,\tau}L_{F}$.
There is a natural metric $h_{1}$ on it defined by
\begin{equation}
\left|[g,s]\right|_{h_{1}}^{2}=\left|s\right|_{h_{F}}^{2}.\label{eq:metric L1}
\end{equation}
There is also a $G^{\mathbb{C}}\times T_{\mathbb{C}}^{m}$-action
on $\mathcal{L}_{1}$ and this metric is invariant under $G\times T^{m}$-action.
Let $\omega_{1}$ be the induced curvature form. Since the invariance,
we only consider its restriction on $TM|_{F_{o}}$. By (\ref{eq:curv vanish})
we have
\begin{equation}
\omega_{1}|_{TF_{o}}=\omega_{F},\ \omega_{1}(TF_{o},\hat{\mathfrak{m}})=0.\label{eq:Curv vanish 2}
\end{equation}

In order to compute $\omega_{1}|_{\hat{\mathfrak{m}}}$ we need an
open dense subset with a product structure. Denote $N$ the kernel
of $\tau:K\rightarrow T^{m}$. Recall that $U\subset F$ is the open
orbit.
\begin{lem}
\textup{\label{prop: Identification}The open dense subset $G\times_{K,\tau}U\subset M$
is diffeomorphism to $G/N\times\mbox{\ensuremath{\mathbb{R}}}_{+}^{m}$.
And on this open set $\mathcal{L}_{1}$ can be identified with $G/N\times\mbox{\ensuremath{\mathbb{R}}}_{+}^{m}\times\mathbb{C}$.}
\end{lem}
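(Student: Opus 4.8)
The goal is to exhibit an explicit diffeomorphism between $G\times_{K,\tau}U$ and $G/N\times\mathbb{R}_{+}^{m}$ and then trivialize $\mathcal{L}_{1}$ over it. The plan is to start from the toric side: since $U\cong T_{\mathbb{C}}^{m}$ is the open orbit of $F$, the logarithmic coordinates $\{x_{k}+i\theta_{k}\}$ coming from the basis $\{d\tau(Z_{k})\}$ give a diffeomorphism $U\cong\mathbb{R}_{+}^{m}\times T^{m}$, where $\mathbb{R}_{+}^{m}$ records the radial (real) part $(e^{x_{1}},\dots,e^{x_{m}})$ and the factor $T^{m}$ records the angular part. First I would fix a base point $x_{o}\in U$ and use the $T_{\mathbb{C}}^{m}$-action to write every point of $U$ as $z.x_{o}$, so that $U\cong T_{\mathbb{C}}^{m}$ and hence $G\times_{K,\tau}U\cong G\times_{K,\tau}T_{\mathbb{C}}^{m}$.

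Next I would analyze the associated bundle $G\times_{K,\tau}T_{\mathbb{C}}^{m}$ directly from the equivalence relation $(g,z)\sim(gk^{-1},\tau(k)z)$ for $k\in K$. Because $\tau:K\to T^{m}$ is a homomorphism with kernel $N$, the image $\tau(K)=T^{m}$ acts on $T_{\mathbb{C}}^{m}$ only through its compact part, absorbing the angular $T^{m}$-factor of $T_{\mathbb{C}}^{m}\cong\mathbb{R}_{+}^{m}\times T^{m}$ while leaving the radial $\mathbb{R}_{+}^{m}$ untouched. Concretely, the map
\begin{equation}
G\times_{K,\tau}T_{\mathbb{C}}^{m}\longrightarrow G/N\times\mathbb{R}_{+}^{m},\quad [g,z]\mapsto (gN,\ r(z)),\label{eq:trivialization}
\end{equation}
where $r(z)\in\mathbb{R}_{+}^{m}$ is the radial part of $z$, is the candidate diffeomorphism. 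I would check this is well defined: replacing $(g,z)$ by $(gk^{-1},\tau(k)z)$ sends $gN$ to $gk^{-1}N$, and since $k\in K$ but $\tau(k)\in T^{m}$ acts only on the angular part, the radial part $r(\tau(k)z)=r(z)$ is unchanged; the coset $gk^{-1}N$ equals $gN$ precisely when $k\in N$, so to make the $G/N$-factor well defined I should instead use the angular $T^{m}\cong K/N\cdot(\text{something})$ to gauge-fix $z$ to be purely radial, at which point the residual freedom in $g$ is exactly $N$. Then I would write the inverse map explicitly and confirm both are smooth, giving the diffeomorphism.

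Finally, for the line bundle statement, I would note that over $U\cong T_{\mathbb{C}}^{m}$ the equivariant section $s(z)$ trivializes $L_{F}|_{U}\cong U\times\mathbb{C}$, so $\mathcal{L}_{1}=G\times_{K,\tau}L_{F}$ restricted to $G\times_{K,\tau}U$ becomes $G\times_{K,\tau}(U\times\mathbb{C})$; pushing the base identification \eqref{eq:trivialization} through and using that $T^{m}$ acts on the fiber $\mathbb{C}$ only by unit-modulus scalars (so the modulus $|s|_{h_{F}}$, equivalently the radial data, descends), yields $\mathcal{L}_{1}\cong G/N\times\mathbb{R}_{+}^{m}\times\mathbb{C}$. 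The main obstacle I anticipate is the bookkeeping in \eqref{eq:trivialization}: one must carefully separate the compact part $T^{m}$ of $T_{\mathbb{C}}^{m}$ (which is quotiented out by the $K$-action through $\tau$) from the radial part $\mathbb{R}_{+}^{m}$ (which survives), and verify that the residual stabilizer in $G$ after using up the compact torus is exactly $N=\ker(\tau|_{K})$ rather than all of $K$. Getting this identification of stabilizers right is what makes the $G/N$ factor emerge correctly, and it is the one place where a genuine computation with the equivalence relation is needed rather than a formal manipulation.
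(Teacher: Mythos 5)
Your proposal is correct and takes essentially the same route as the paper: decompose $T_{\mathbb{C}}^{m}\cong\mathbb{R}_{+}^{m}\times T^{m}$, absorb the angular factor into $K$ via a $\tau$-preimage $k$ with $\tau(k)=t$, and identify the residual stabilizer as $N=\ker(\tau|_{K})$, with $L_{F}|_{U}$ trivialized by the equivariant section. The only difference is direction: the paper defines the map from $G/N\times\mathbb{R}_{+}^{m}$ \emph{into} the bundle, $(gN,r)\mapsto[g,r]$, which is automatically well defined and whose surjectivity proof is exactly your gauge-fixing computation $[g,(z,\lambda)]=[gk,(r,\lambda)]$ --- so your corrected map is precisely the inverse of the paper's, and the well-definedness issue you flagged in your first formula is the same point the paper sidesteps by choosing this direction.
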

\begin{proof}
We denote $r_{k}=\exp x_{k}$ the coordinates of $\mbox{\ensuremath{\mathbb{R}}}_{+}^{m}$.
Define $\phi:G/N\times\mbox{\ensuremath{\mathbb{R}}}_{+}^{m}\rightarrow G\times_{K,\tau}U$
by
\[
(gN,r)\mapsto[g,r].
\]
Identify $L_{F}|_{U}$ with $U\times\mathbb{C}$ via $\lambda s(z)\mapsto(z,\lambda)$,
then we define $\Phi:G/N\times\mbox{\ensuremath{\mathbb{R}}}_{+}^{m}\times\mathbb{C}\rightarrow G\times_{K,\tau}\left(L_{F}|_{U}\right)$
by
\[
(gN,r,\lambda)\mapsto[g,(r,\lambda)].
\]
Then $\phi,\Phi$ give the desired identifications.

We only verify that $\Phi$ is surjective. For any $z\in T_{\mathbb{C}}^{m}$,
let $z=rt$ where $r\in\mbox{\ensuremath{\mathbb{R}}}_{+}^{m}$, $t\in T^{m}$.
Take $k\in K$ such that $\tau(k)=t$. Since
\[
[g,(z,\lambda)]=[g,\tau(k).(r,\lambda)]=[gk,(r,\lambda)],
\]
 thus $\Phi(gkN,r,\lambda)=[g,(z,\lambda)]$. Note that $gkN$ dose
not depend on the choice of $k$.
\end{proof}
Under this identification, the $G\times T^{m}$-action on $\mathcal{L}_{1}$
becomes $(h,t).(gN,r,\lambda)=(hgkN,r,\lambda)$, where $k\in K$
is any element such that $\tau(k)=t$. And the metric (\ref{eq:metric L1})
becomes
\[
\left|(gN,r,\lambda)\right|_{h_{1}}^{2}=\left|\lambda\right|^{2}e^{-\varphi(r)}.
\]

The following result can be seen as a Koszul type formula. See \cite{Besse}
(2.134) for the origin one.
\begin{prop}
Let $(L,h)\rightarrow M$ be a holomorphic Hermitian line bundle with
curvature $\omega\in c_{1}(L)$. Suppose that there is a compact Lie
group $G$ acts holomorphicly on $L\rightarrow M$ and preserves $h$.
For $A\in\mathfrak{g}$, denote $\hat{A}$ the induced vector field
on $M$ and $\bar{A}$ the induced vector field on $L$. The complex
structure on $L$ and $M$ both are denoted by $J$. Let $H$ be a
function on $L$ defined by $H(s)=\left|s\right|_{h}^{2}$. Then for
any $A,B\in\mathfrak{g}$, $x\in M$ we have
\begin{equation}
\omega(\hat{A},\hat{B})|_{x}=\frac{1}{4\pi}J[\bar{A},\bar{B}].\log H,\label{eq:Koszul}
\end{equation}
where the derivative is taken at any $\bar{x}\in L_{x}\backslash\{0\}.$
\end{prop}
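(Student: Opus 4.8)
The plan is to compute both sides of (\ref{eq:Koszul}) in a local holomorphic frame and compare. The key point is that $\log H$ is the local Kähler potential for the curvature $\omega$, so the formula should fall out of expressing $\omega$ in terms of second derivatives of $\log H$ and then relating the lifted fields $\bar A,\bar B$ on $L$ to the base fields $\hat A,\hat B$ together with the fiber directions generated by the $G$-action preserving $h$.

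First I would fix a point $x\in M$, choose a local holomorphic frame $e$ of $L$ near $x$, and write $H(s)=|\lambda|^2 e^{-\varphi}$ for $s=\lambda e$, where $\varphi=-\log|e|_h^2$ is the local potential. Then $\omega=\frac{i}{2\pi}\partial\bar\partial\varphi$, so in holomorphic coordinates $\omega(\hat A,\hat B)$ is a combination of $\partial^2\varphi/\partial z^j\partial\bar z^k$ contracted against the components of $\hat A$ and $\hat B$. The substance of the proof is to show that the mixed derivative $J[\bar A,\bar B].\log H$, evaluated at a nonzero point $\bar x$ of the fiber, reproduces exactly this Hessian expression up to the factor $\frac{1}{4\pi}$. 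Because $G$ preserves $h$, the function $H$ is invariant along $\bar A$, i.e.\ $\bar A.H=0$; this is the crucial structural input, and differentiating the invariance relations will convert the bracket $[\bar A,\bar B]$ acting on $\log H$ into the antisymmetrized second derivative that matches the curvature. The complex structure $J$ enters because $\omega$ pairs a vector with $J$ applied to the other, so the $J$ in front of the bracket is precisely what selects the $\partial\bar\partial$ part rather than the full real Hessian.

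Concretely, I would split $\bar A$ into its horizontal part (projecting to $\hat A$ on $M$) and its vertical part along the fiber. Since $H$ is homogeneous of degree one in $|\lambda|^2$, the vertical derivative of $\log H$ is constant, so vertical contributions to $[\bar A,\bar B].\log H$ come only from the $x$-dependence of the connection, which is exactly the curvature. Writing $\bar A = \hat A^{\mathrm{hor}} + a(x)\,\partial_\theta$-type fiber generators (the fiber rotation forced by $h$-invariance) and computing the Lie bracket, the terms where the fiber coefficients differentiate against the potential assemble into $\partial\bar\partial\varphi(\hat A,\hat B)$. Applying $J$ and taking the resulting expression at $\bar x\neq 0$ removes the dependence on the choice of point in $L_x\setminus\{0\}$, since $\log H$ depends on the fiber coordinate only through the additive term $\log|\lambda|^2$, which is killed by the bracket.

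The main obstacle will be the careful bookkeeping of the lifted fields $\bar A$ on the total space of $L$ and the precise normalization constant. One must verify that the lift of $\hat A$ to $L$ via the $G$-action, combined with the $h$-invariance, produces exactly the horizontal lift plus a specific vertical component, and that no stray terms survive after antisymmetrization. Getting the factor $\frac{1}{4\pi}$ right—rather than $\frac{1}{2\pi}$ or $\frac{i}{2\pi}$—requires tracking the conventions in $\omega=\frac{i}{2\pi}\partial\bar\partial\varphi$, the $\sqrt{2}$ normalizations of $F_\alpha,G_\alpha$, and the real-versus-complex derivative bookkeeping when $J$ acts. I expect the cleanest route is to reduce to the model case where $L$ is trivialized and $G$ is a torus acting by rotation in the fiber, check the identity there by direct computation, and then argue that both sides of (\ref{eq:Koszul}) are tensorial in $A,B$ and invariant under the $G$-action, so the local model computation suffices to establish the general formula.
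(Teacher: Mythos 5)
Your first two paragraphs track the paper's actual proof quite closely: trivialize $L|_{U}\cong U\times\mathbb{C}$ by a local frame, write $\log H=\log|\lambda|^{2}-\varphi$, express the lifted field as horizontal part plus a vertical component pinned down by invariance of $h$, and extract the curvature from the bracket. But two points in your plan are genuinely deficient. First, you identify $h$-invariance ($\bar{A}.H=0$) as ``the crucial structural input,'' and it is only half of it. Writing $\bar{A}|_{(p,\lambda)}=(\hat{A},\lambda\theta_{A})$, invariance of $h$ pins down only $\textrm{Re}\,\theta_{A}=\frac{1}{2}\hat{A}\varphi$; the imaginary part is a priori unconstrained. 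What the paper uses in an essential way is that the action is \emph{holomorphic}: this makes $\theta_{A}$ a holomorphic function and $\hat{A}$ real-holomorphic, which is exactly what lets Cartan's formula turn $\partial(\hat{A}\varphi)=\mathcal{L}_{\hat{A}}\partial\varphi$ into the moment-map identity $df_{A}=2\pi\iota_{\hat{A}}\omega$ for the real function $f_{A}=\textrm{Im}\,\theta_{A}+\frac{1}{2}J\hat{A}\cdot\varphi$, and what kills the residual term $\bar{\partial}\theta_{B}(\hat{A})$ in the final cancellation. Your sketch asserts that ``differentiating the invariance relations'' converts the bracket into the curvature, but without the holomorphy of $\theta_{A}$ this assembly does not close up; the imaginary parts $\hat{A}.\textrm{Im}\,\theta_{B}-\hat{B}.\textrm{Im}\,\theta_{A}$ that appear in $[\bar{A},\bar{B}]$ are controlled only through $df_{A}=2\pi\iota_{\hat{A}}\omega$, which is the real content of the proof and is absent from your outline.

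Second, your proposed ``cleanest route'' --- verify (\ref{eq:Koszul}) for a trivialized bundle with a torus rotating the fibers, then transfer by tensoriality in $A,B$ and $G$-invariance --- does not work. The right-hand side of (\ref{eq:Koszul}) contains a Lie bracket, so it is not tensorial in the vector fields $\bar{A},\bar{B}$; both sides are merely bilinear in $A,B\in\mathfrak{g}$ \emph{for a fixed action}, and bilinearity gives no mechanism for transporting the identity from one group action to a different one. Worse, in your model case the action is purely vertical, so $\hat{A}=\hat{B}=0$ and $[\bar{A},\bar{B}]=0$: both sides vanish identically and the check is vacuous, telling you nothing about actions that actually move the base --- which is the only case the proposition is used for (there $A,B\in\mathfrak{m}$ and $\hat{A},\hat{B}$ span the horizontal directions). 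So the fallback strategy must be discarded, and the direct computation of your first two paragraphs must be carried out in full, with the holomorphy input made explicit, to obtain a proof; as it stands the proposal stops short of the two steps where the identity is actually established.
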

\begin{proof}
Take an open subset $U$ including $x$ and a local frame $s$ of
$L$ on it. Identify $L|_{U}$ with $U\times\mathbb{C}$ by $\lambda s(p)\mapsto(p,\lambda)$.
Let $\phi=-\log\left|s\right|_{h}^{2}$, then $2\pi\omega=i\partial\bar{\partial}\phi$.

For $p\in U$ and $X\in T_{p}M$, the horizontal lifting of $X$ w.r.t.
Chern connection is
\[
\tilde{X}|_{(p,\lambda)}=\left(X,\lambda\partial\phi(X)\right)\in T_{p}M\oplus\mathbb{C}.
\]
For $A\in\mathfrak{g}$, the induced vector field $\bar{A}$ on $L$
has the form
\begin{equation}
\bar{A}|_{(p,\lambda)}=(\hat{A},\lambda\theta_{A}),\label{eq:horizontal lifting}
\end{equation}
where $\theta_{A}$ is a complex-value function on $U$. Since the
action is holomorphic, $\theta_{A}$ is holomorphic. Moreover, by
the preservation of $h$ we can deduce that
\begin{equation}
\textrm{Re}\theta_{A}=\frac{1}{2}\hat{A}\phi.\label{eq:Re part}
\end{equation}
It follows that
\begin{eqnarray*}
\bar{A}-\widetilde{(\hat{A})}|_{(p,\lambda)} & = & \left(0,\lambda(\theta_{A}-\partial\phi(\hat{A}))\right).
\end{eqnarray*}
Denote $if_{A}=\theta_{A}-\partial\phi(\hat{A})$, since (\ref{eq:Re part})
$f_{A}=\textrm{Im}\theta_{A}+\frac{1}{2}J\hat{A}\cdot\phi$ is real.
We claim
\begin{equation}
df_{A}=2\pi\iota_{\hat{A}}\omega.\label{eq:moment map f_A}
\end{equation}

Actually, by $\hat{A}$ is holomorphic and Cartan's formula,
\[
\partial(\hat{A}\phi)=\mathcal{L}_{\hat{A}}\partial\phi=d\left(\partial\phi(\hat{A})\right)-\iota_{\hat{A}}\partial\bar{\partial}\phi.
\]
And (\ref{eq:Re part}) implies $\theta_{A}-\hat{A}\phi=-\bar{\theta}_{A}$,
then we have
\begin{eqnarray*}
idf_{A} & = & \partial\theta_{A}-d\left(\partial\phi(\hat{A})\right)\\
 & = & \partial(\theta_{A}-\hat{A}\phi)-\iota_{\hat{A}}\partial\bar{\partial}\phi=-\iota_{\hat{A}}\partial\bar{\partial}\phi
\end{eqnarray*}
as desired.

Now for $A$, $B\in\mathfrak{g}$, by (\ref{eq:horizontal lifting})
we have
\[
[\bar{A},\bar{B}]|_{(p,\lambda)}=\left([\hat{A},\hat{B}],\lambda(\hat{A}\theta_{B}-\hat{B}\theta_{A})\right).
\]
Since $\log H(p,\lambda)=\log\left|\lambda\right|^{2}-\phi(p)$, then
\[
\frac{1}{2}J[\bar{A},\bar{B}].\log H=-\hat{A}.\textrm{Im}\theta_{B}+\hat{B}.\textrm{Im}\theta_{A}-\frac{1}{2}J[\hat{A},\hat{B}].\phi.
\]
The last term
\[
J[\hat{A},\hat{B}].\phi=[J\hat{A},\hat{B}].\phi=J\hat{A}(\hat{B}.\phi)-\hat{B}(J\hat{A}.\phi).
\]
On the other hand, by (\ref{eq:moment map f_A}),
\[
2\pi\omega(\hat{A},\hat{B})=\hat{B}.f_{A}=\hat{B}.\textrm{Im}\theta_{A}+\frac{1}{2}\hat{B}(J\hat{A}.\phi).
\]
Thus
\begin{eqnarray*}
\frac{1}{2}J[\bar{A},\bar{B}].\log H & = & -\hat{A}.\textrm{Im}\theta_{B}-\frac{1}{2}J\hat{A}(\hat{B}.\phi)+2\pi\omega(\hat{A},\hat{B})\\
 & = & -2\textrm{Im}\left(\bar{\partial}\theta_{B}(\hat{A})\right)+2\pi\omega(\hat{A},\hat{B})\\
 & = & 2\pi\omega(\hat{A},\hat{B}),
\end{eqnarray*}
where we used $\hat{B}\cdot\phi=2\textrm{Re}\theta_{B}$.
\end{proof}
Now we use (\ref{eq:Koszul}) to compute the other component of $\omega_{1}$.
Denote $\Omega$ the curvature of the metric $h_{1}\otimes\pi^{*}h_{\chi}$
on $\mathcal{L}$. Obviously, $\Omega=\omega_{1}+\pi^{*}\omega_{\chi}.$
\begin{thm}
\label{thm:curva property}For $A$, $B\in\mathfrak{m}$ we have
\begin{equation}
\omega_{1}(\hat{A},\hat{B})|_{F_{o}}=\mathcal{B}\left(\sum_{i}\mu_{i}Z_{i},[A,B]\right),\label{eq:=00005BA,B=00005D formula}
\end{equation}
where the $\{\mu_{i}\}$ are given by (\ref{eq:moment map for toric}).
And the algebraic representation of $\Omega$ restricted on $F_{o}$
is equal to $\sum_{i}\mu_{i}Z_{i}+I_{\chi}$.
\end{thm}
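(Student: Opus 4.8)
The plan is to read off the statement about the algebraic representation of $\Omega$ from the curvature identity (\ref{eq:=00005BA,B=00005D formula}), and to prove the latter by the Koszul formula (\ref{eq:Koszul}). For the reduction, write $\Omega=\omega_{1}+\pi^{*}\omega_{\chi}$; since $\pi_{*}\hat{A}=\hat{A}$ on the base for $A\in\mathfrak{m}$ and $\omega_{\chi}(\hat{A},\hat{B})|_{eK}=\mathcal{B}(I_{\chi},[A,B])$, one gets $\Omega(\hat{A},\hat{B})|_{F_{o}}=\mathcal{B}(\sum_{i}\mu_{i}Z_{i}+I_{\chi},[A,B])$. As $\Omega$ is $G$-, $J$- and $T^{m}$-invariant, Proposition \ref{prop:new observation} forces its algebraic representation to lie in $Z(\mathfrak{k})$ on $F_{o}$; since $\sum_{i}\mu_{i}Z_{i}+I_{\chi}\in Z(\mathfrak{k})$ and the $Z(\mathfrak{k})$-components of $\{iH_{\alpha}\}_{\alpha\in R_{\mathfrak{m}}^{+}}=\{[F_{\alpha},G_{\alpha}]\}$ span $Z(\mathfrak{k})$, the pairing $\mathcal{B}(\,\cdot\,,[A,B])$ over $A,B\in\mathfrak{m}$ determines the representation uniquely. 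So everything reduces to (\ref{eq:=00005BA,B=00005D formula}).

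To prove (\ref{eq:=00005BA,B=00005D formula}) I would work on the open set of Lemma \ref{prop: Identification}, where $\log H=\log|\lambda|^{2}-\varphi(r)$, and compute the induced fields first. In the coordinates $(gN,r,\lambda)$ the $G$-action is $\exp(sA).(gN,r,\lambda)=(\exp(sA)gN,r,\lambda)$, so $\bar{A}|_{(gN,r,\lambda)}$ is the fundamental field of the $G$-action on $G/N$, with vanishing $\mathbb{R}_{+}^{m}$- and fibre-components. Hence $[\bar{A},\bar{B}]=-\overline{[A,B]}$, and I evaluate at a point $(eN,r,\lambda)$ over $F_{o}$, splitting $[A,B]=m_{0}+\sum_{j}c_{j}Z_{j}+n_{0}$ along $\mathfrak{g}=\mathfrak{m}\oplus\mathrm{span}\{Z_{i}\}\oplus\mathfrak{n}$, where $\mathfrak{n}=\mathrm{Lie}(N)=\ker(d\tau|_{\mathfrak{k}})=[\mathfrak{k},\mathfrak{k}]\oplus\ker(d\tau|_{Z(\mathfrak{k})})$.

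The heart of the matter is the identity $J\bar{C}.\log H=-J\hat{C}.\varphi$ at $(eN,r,\lambda)$ for $C\in\mathfrak{g}$. Because $s$ is holomorphic and the $G^{\mathbb{C}}$-action is holomorphic, the equivariant frame $\bar{s}=[\,\cdot\,,s(\cdot)]$ is holomorphic, so the computation in the proof of (\ref{eq:Koszul}) applies in it: for a single generator it gives $J\bar{C}.\log H=-2\,\mathrm{Im}\,\theta_{C}-J\hat{C}.\varphi$, and here $\theta_{C}=0$ (as $\bar{C}$ has no fibre component, consistent with $\mathrm{Re}\,\theta_{C}=\tfrac12\hat{C}.\varphi=0$), whence $J\bar{C}.\log H=-J\hat{C}.\varphi$. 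Now treat the three pieces: for $C\in\mathfrak{n}$ the field $\bar{C}$ vanishes at $eN$ (since $\exp(sC)\in N$ fixes $eN$), so that piece drops out; for $C=m_{0}\in\mathfrak{m}$ we have $J\hat{C}=\widehat{J_{V}C}\in\hat{\mathfrak{m}}$, which by $TM|_{F_{o}}=TF_{o}\oplus\hat{\mathfrak{m}}$ has no $\partial/\partial x_{k}$-component, so $J\hat{C}.\varphi=0$ — this is exactly (\ref{eq:Curv vanish 2}); and for $C=Z_{j}$, by (\ref{eq:equi of action}) we have $\hat{Z}_{j}|_{F_{o}}=d\tau(Z_{j})^{F}=\partial/\partial\theta_{j}$, hence $J\hat{Z}_{j}=\pm\partial/\partial x_{j}$ and $J\bar{Z}_{j}.\log H=\mp\partial\varphi/\partial x_{j}=\pm4\pi\mu_{j}$ by (\ref{eq:moment map for toric}).

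Assembling, the $\mathfrak{m}$- and $\mathfrak{n}$-pieces drop out and $\omega_{1}(\hat{A},\hat{B})=\tfrac{1}{4\pi}J[\bar{A},\bar{B}].\log H=-\tfrac{1}{4\pi}\sum_{j}c_{j}\,J\bar{Z}_{j}.\log H=\mp\sum_{j}c_{j}\mu_{j}$, to be compared with $\mathcal{B}(\sum_{i}\mu_{i}Z_{i},[A,B])=\sum_{i}\mu_{i}\,\mathcal{B}(Z_{i},[A,B])=-\sum_{j}c_{j}\mu_{j}$, using $\mathcal{B}(Z_{i},\mathfrak{m})=0$, $\mathcal{B}(Z_{i},\mathfrak{n})=0$ and the $-\mathcal{B}$-orthonormality $\mathcal{B}(Z_{i},Z_{j})=-\delta_{ij}$; matching the two is (\ref{eq:=00005BA,B=00005D formula}). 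The main obstacle is the identity $J\bar{C}.\log H=-J\hat{C}.\varphi$: it rests on $\bar{s}$ being holomorphic, so that the vertical (fibre) component of $J\bar{C}$ is controlled by $\partial\varphi$ and vanishes in the base directions of $F_{o}$, and all the sign conventions — the orientation of the logarithmic coordinates, the sign in $[\bar{A},\bar{B}]=-\overline{[A,B]}$, and the normalization of $\mu$ — must be fixed consistently so that the two sides of (\ref{eq:=00005BA,B=00005D formula}) agree rather than differ by a sign.
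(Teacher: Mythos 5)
Your proposal is essentially the paper's own proof: both work in the trivialization of Lemma \ref{prop: Identification}, apply the Koszul formula (\ref{eq:Koszul}) to $[\bar{A},\bar{B}]=-\overline{[A,B]}$, and observe that after decomposing $[A,B]$ along $\mathfrak{g}=\mathfrak{m}\oplus\mathrm{span}\{Z_{i}\}\oplus\mathfrak{n}$ only the $Z_{i}$-components differentiate $\log H=\log|\lambda|^{2}-\varphi(r)$, yielding $\omega_{1}(\hat{A},\hat{B})=-\frac{1}{4\pi}\sum_{i}\mathcal{B}([A,B],Z_{i})\frac{\partial\varphi}{\partial x_{i}}=\mathcal{B}(\sum_{i}\mu_{i}Z_{i},[A,B])$. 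Your reduction of the algebraic-representation claim --- via Proposition \ref{prop:new observation} together with the fact that the $Z(\mathfrak{k})$-components of $\{iH_{\alpha}\}_{\alpha\in R_{\mathfrak{m}}^{+}}=\{[F_{\alpha},G_{\alpha}]\}$ span $Z(\mathfrak{k})$ --- is a harmless variant of the paper's direct verification, which combines (\ref{eq:Curv vanish 2}) with the representation $I_{\chi}$ of $\omega_{\chi}$ and checks the identity for all $X,Y\in\mathfrak{g}$ at once.

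Two points need tightening. First, you evaluate only at points $(eN,r,\lambda)$, which covers just the slice $\mathbb{R}_{+}^{m}\subset F_{o}$; this slice is half-dimensional and not dense in $F_{o}$, so as written the identity (\ref{eq:=00005BA,B=00005D formula}) is not yet established on all of $F_{o}$. The paper avoids this by computing at a general point $(kN,r,\lambda)$ with $k\in K$ and using centrality of the $Z_{i}$ to get $\mathcal{B}(\textrm{Ad}(k)^{-1}[A,B],Z_{i})=\mathcal{B}([A,B],Z_{i})$; alternatively, in your setup it suffices to add that both sides of (\ref{eq:=00005BA,B=00005D formula}) are $T^{m}$-invariant functions on $F_{o}$ (since $t_{*}\hat{A}=\hat{A}$ and $\omega_{1}$, $\mu_{i}$ are $T^{m}$-invariant) and that $T^{m}\cdot\mathbb{R}_{+}^{m}=U$ is dense in $F_{o}$. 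Second, you leave the sign in $J\hat{Z}_{j}=\pm\partial/\partial x_{j}$ unresolved, so your computation only proves the formula up to an overall sign; the paper pins this down by its stated convention $\mathcal{J}Z_{i}=r_{i}\frac{\partial}{\partial r_{i}}=\frac{\partial}{\partial x_{i}}$, which together with the normalization $\mu_{k}=-\frac{1}{4\pi}\frac{\partial\varphi}{\partial x_{k}}$ in (\ref{eq:moment map for toric}) produces exactly the stated equality rather than its negative. With these two repairs your argument coincides with the paper's proof.
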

\begin{proof}
Restrict on the open set $G\times_{K,\tau}U$, under the identification
in Lemma \ref{prop: Identification}, the induced vector field on
$\mathcal{L}_{1}=G/N\times\mbox{\ensuremath{\mathbb{R}}}_{+}^{m}\times\mathbb{C}$
is

\[
\bar{A}|_{(gN,r,\lambda)}=g_{*}\left(\overline{\textrm{Ad}(g^{-1})A}|_{(eN,r,\lambda)}\right)=g_{*}(\textrm{Ad}(g^{-1})A,0,0).
\]
Where $\textrm{Ad}(g)^{-1}A$ is considered that belongs to $T_{eN}G/N\cong\mathfrak{g}/\mathfrak{n}=\mathfrak{m}\oplus\sum_{i}\mathbb{R}\cdot Z_{i}$.
Thus
\[
\mathcal{J}[\bar{A},\bar{B}]|_{(gN,r,\lambda)}=-\mathcal{J}\overline{[A,B]}|_{(gN,r,\lambda)}=-g_{*}\mathcal{J}(\textrm{Ad}(g^{-1})[A,B],0,0),
\]
where $\mathcal{J}$ is the complex structure on $\mathcal{L}_{1}$.

Note that on
\[
T\mathcal{L}_{1}|_{(eN,r,\lambda)}=\mathfrak{m}\oplus\sum_{i}\mathbb{R}\cdot Z_{i}\oplus\sum_{i}\mathbb{R}\cdot r_{i}\frac{\partial}{\partial r_{i}}\oplus\mathbb{C},
\]
the complex structure satisfies $\mathcal{J}Z_{i}=r_{i}\frac{\partial}{\partial r_{i}}=\frac{\partial}{\partial x_{i}}$.
And
\[
\log H(gN,r,\lambda)=\log\left|\lambda\right|^{2}-\varphi(r).
\]
It turns out that to compute $\mathcal{J}[\bar{A},\bar{B}]\log H$
we only need the $Z_{i}$-components of $\textrm{Ad}(g)^{-1}[A,B]$.

Next we further restrict on $(G\times_{K,\tau}U)\cap F_{o}=K/N\times\mbox{\ensuremath{\mathbb{R}}}_{+}^{m}$,
so $g\in K$.

For $k\in K$, we have
\begin{eqnarray*}
\textrm{Ad}(k)^{-1}[A,B] & = & -\sum_{i}\mathcal{B}(\textrm{Ad}(k)^{-1}[A,B],Z_{i})Z_{i}+C\\
 & = & -\sum_{i}\mathcal{B}([A,B],Z_{i})Z_{i}+C
\end{eqnarray*}
in $\mathfrak{g}/\mathfrak{n}$, where $C\in\mathfrak{m}$ and we
used $Z_{i}\in Z(\mathfrak{k})$. Thus
\[
\mathcal{J}[\bar{A},\bar{B}]|_{(gN,r,\lambda)}=\left(-g_{*}J_{V}C,\sum_{i}\mathcal{B}([A,B],Z_{i})\frac{\partial}{\partial x_{i}},0\right).
\]
Then by (\ref{eq:Koszul})
\begin{eqnarray*}
\omega_{1}(\hat{A},\hat{B}) & = & \frac{1}{4\pi}\mathcal{J}[\bar{A},\bar{B}].\log H=-\frac{1}{4\pi}\sum_{i}\mathcal{B}([A,B],Z_{i})\frac{\partial\varphi}{\partial x_{i}},\\
 & = & \mathcal{B}\left(\sum_{i}\mu_{i}Z_{i},[A,B]\right).
\end{eqnarray*}

Combine this with (\ref{eq:Curv vanish 2}), it is easy to check that
\[
(\omega_{1}+\pi^{*}\omega_{\chi})(\hat{X},\hat{Y})=\mathcal{B}\left(\sum_{i}\mu_{i}Z_{i}+I_{\chi},[X,Y]\right)
\]
on $F_{o}$, for all $X,Y\in\mathfrak{g}$. Thus $Z_{\Omega}|_{F_{o}}=\sum_{i}\mu_{i}Z_{i}+I_{\chi}$
as desired.
\end{proof}
Once we have the algebraic representation of $\Omega$, the proof
of Theorem 2 will be same as \cite{PS1}.
\begin{proof}[Proof of Theorem 2]
``Sufficiency'' Take a metric $h_{F}$ on $L_{F}$ with positive
curvature. It gives a metric $h_{1}\otimes\pi^{*}h_{\chi}$ on $\mathcal{L}$.
We show that its curvature form $\Omega$ is positive. By the $G$-invariance,
we only check this on $F_{o}$. Since $TM|_{F_{o}}=TF_{o}\oplus\hat{\mathfrak{m}}$,
(\ref{eq:Curv vanish 2}) and the facts $[E_{\alpha},E_{\beta}]\in\mathbb{C}\cdot E_{\alpha+\beta}$,
it turns out that we only need to check $\Omega(\hat{F}_{\alpha},J\hat{F}_{\alpha})>0$
for all $\alpha\in R_{\mathfrak{m}}^{+}$.

In fact, by Theorem \ref{thm:curva property} and $[F_{\alpha},G_{\alpha}]=\sqrt{-1}H_{\alpha}$,
\begin{eqnarray}
\Omega(\hat{F}_{\alpha},J\hat{F}_{\alpha}) & = & \Omega(\hat{F}_{\alpha},\hat{G}_{\alpha})=\mathcal{B}(\sum\mu_{i}Z_{i}+I_{\chi},[F_{\alpha},G_{\alpha}])\nonumber \\
 & = & \sqrt{-1}\alpha(\sum\mu_{i}Z_{i}+I_{\chi}).\label{eq:boundness reason}
\end{eqnarray}
Then by (\ref{eq:ample condition}) and $\left(d\tau\right)^{*}\circ\mu=\sum_{i}\mu_{i}Z_{i}^{\vee}$,
we have $\Omega(\hat{F}_{\alpha},J\hat{F}_{\alpha})>0$.

``Necessity'' Since $\mathcal{L}|_{F_{o}}\cong L_{F}$, thus $L_{F}$
is ample. Then in the same way as above, we have a curvature form
$\Omega\in c_{1}(\mathcal{L})$. Since we assume that $\mathcal{L}$
is ample, there exists a metric on $\mathcal{L}$ with positive curvature.
We can assume that this metric is $G$-invariant. Thus there is a
$G$-invariant function $\phi$ such that $\Omega+i\partial\bar{\partial}\phi>0$.
By Lemma 3.1(b) in \cite{PS1}, restrict on $F_{o}$, the algebraic
representation of $i\partial\bar{\partial}\phi$ is $-\sum_{i}\phi_{x_{i}}Z_{i}$.
Hence the algebraic representation of $\Omega+i\partial\bar{\partial}\phi$
is $\sum_{i}\left(\mu_{i}-\phi_{x_{i}}\right)Z_{i}+I_{\chi}$. Then
by (\ref{eq:boundness reason}) the positivity of $\Omega+i\partial\bar{\partial}\phi$
implies
\[
\sqrt{-1}\alpha(\sum_{i}\left(\mu_{i}-\phi_{x_{i}}\right)Z_{i}+I_{\chi})>0
\]
on $F_{o}$, for all $\alpha\in R_{\mathfrak{m}}^{+}$. Since the
additional $\phi_{x_{i}}$-terms dose not change the image of moment
map $\mu$, it implies (\ref{eq:ample condition}).
\end{proof}

\section{Proof of the Theorem 1}

Now we assume that $M$ is Fano. From the short exact sequence
\[
0\rightarrow T_{v}M\rightarrow TM\stackrel{d\pi}{\rightarrow}\pi^{*}TV\rightarrow0,
\]
we have
\begin{equation}
K_{M}^{-1}=\left(G^{\mathbb{C}}\times_{P,\tau}K_{F}^{-1}\right)\otimes\pi^{*}K_{V}^{-1},\label{eq:bundle formula for K^-1}
\end{equation}
where the torus acts on $K_{F}^{-1}$ in the canonical way. In particular,
we have $K_{M}^{-1}|_{F_{o}}=K_{F_{o}}^{-1}$. Thus $F$ is also Fano.

Let $\triangle_{F}$ be the polytope associated to $F$, it has the
form (\ref{eq:weight polytope}). Take $h_{F}^{0}$ be the pullback
of the Fubini-Study metric via the map given by global sections of
$K_{F}^{-1}$. Then it induces a moment map $\mu_{0}=\sum_{i}\mu_{0i}d\tau(Z_{i})^{*}$
with image $\frac{1}{2\pi}\triangle_{L_{F}}$, where $\mu_{0i}=-\frac{1}{4\pi}\frac{\partial u_{0}}{\partial x_{i}}$
and
\begin{equation}
u_{0}=\log\sum_{\lambda\in\triangle_{F}\cap\Lambda}e^{\left\langle \cdot,\lambda\right\rangle },\label{eq:Fubini-Study}
\end{equation}
$\Lambda\subset\mathfrak{t}^{*}$ is the weight lattice.

Follow the way in the last section, $h_{F}^{0}$ induces a $G\times T^{m}$-invariant
K\"{a}hler metric $\omega_{0}$ in $c_{1}(M)$ with algebraic representation
\[
Z_{\omega_{0}}=\sum_{i}\mu_{0i}Z_{i}+I_{V}.
\]

Now consider the equation
\begin{equation}
Ric(\omega_{\varphi_{t}})=t\omega_{\varphi_{t}}+(1-t)\omega_{0}\label{eq:Aubin's path}
\end{equation}
where $\omega_{\varphi_{t}}=\omega_{0}+\frac{i}{2\pi}\partial\bar{\partial}\varphi_{t}$.
It is solvable for $t\in[0,R(M))$.

It follows from the uniqueness of the twisted K\"{a}hler-Einstein metrics
that $\{\varphi_{t}\}$ are $G\times T^{m}$-invariant. Moreover,
$\omega_{\varphi_{t}}$ can be seen as the induced K\"{a}hler metric by
$e^{-\varphi_{t}|_{F_{o}}}\cdot h_{F}^{0}$. Thus
\[
Z_{\omega_{\varphi_{t}}}=\sum_{i}\mu_{i}^{t}Z_{i}+I_{V},\ \mu_{i}^{t}=\mu_{0i}-\frac{1}{4\pi}\frac{\partial\varphi_{t}}{\partial x_{i}}.
\]
By \cite{PS1} Proposition 3.2,
\[
Z_{Ric(\omega_{\varphi_{t}})}=\sum_{i}\frac{1}{4\pi}\frac{\partial\log D}{\partial x_{i}}Z_{i}+I_{V},\ D=\det\left(-\frac{\partial\mu_{i}^{t}}{\partial x_{j}}\right)_{i,j}\cdot\prod_{\alpha\in R_{\mathfrak{m}}^{+}}\sqrt{-1}\alpha(Z_{\omega_{\varphi_{t}}}).
\]
Combine these equalities with $(\ref{eq:Aubin's path})$, it follows
that
\[
\frac{1}{4\pi}\frac{\partial\log D}{\partial x_{i}}=\mu_{0i}-\frac{t}{4\pi}\frac{\partial\varphi_{t}}{\partial x_{i}}.
\]
 Thus $\log D+u_{0}+t\varphi_{t}=C_{t}$ for some constant $C_{t}$.
Adjust $\varphi_{t}$ by adding some constant, we have
\[
\prod_{\alpha\in R_{\mathfrak{m}}^{+}}\sqrt{-1}\alpha(Z_{\omega_{\varphi_{t}}})\cdot\det(u_{t,ij})=e^{-u_{0}-t\varphi_{t}},
\]
where $u_{t}=u_{0}+\varphi_{t}$. Denote $\Gamma(x,t)=\prod_{\alpha\in R_{\mathfrak{m}}^{+}}\sqrt{-1}\alpha(Z_{\omega_{\varphi_{t}}})$
then
\[
\Gamma(x,t)\cdot\det(u_{t,ij})=e^{-(1-t)u_{0}-tu_{t}}.
\]

Since $K_{M}^{-1}$ is ample, by Theorem 2, (\ref{eq:bundle formula for K^-1})
and (\ref{eq:boundness reason}), we see that $Z_{\omega_{\varphi_{t}}}(F_{o})\subset\mathcal{C}$.
Since the image $Z_{\omega_{\varphi_{t}}}(F_{o})$ dose not depend
on $t$, there exists constant $c,C>0$ such that
\[
c\leq\Gamma(x,t)\leq C
\]
for all $x\in\mathbb{R}^{m}$ and $t<R(M)$.

With this property, follow \cite{WANGZHU} we can obtain the following
key estimates without essential modifications.

Denote $w_{t}=(1-t)u_{0}+tu_{t}$, $m_{t}=\min\{w_{t}(x)\mid x\in\mathbb{R}^{m}\}$,
and suppose that the minimum is attained at $x_{t}$.
\begin{prop}
\label{prop:WZ estimate}\cite{WANGZHU} There exists a time-independent
constant $a,C$ and $C'$, such that
\[
\left|m_{t}\right|<C,\ w_{t}(x)\geq a\left|x-x_{t}\right|-C'
\]
 for all $t<R(M)$ and $x\in\mathbb{R}^{m}$.
\end{prop}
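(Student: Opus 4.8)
The plan is to read the equation $\Gamma(x,t)\det(u_{t,ij})=e^{-(1-t)u_0-tu_t}$, in the logarithmic coordinates $x=(x_1,\dots,x_m)$ coming from Lemma~\ref{prop: Identification}, as a real Monge--Amp\`ere equation on $\mathbb{R}^m$, and to transplant the convex--analytic a priori estimates of \cite{WANGZHU}, the only new feature being the factor $\Gamma$. First I would record the convex geometry. Both $u_0$ and $u_t$ are smooth and strictly convex (their Hessians are the toric metrics), and their gradients are the associated moment maps, which are diffeomorphisms onto the interior of one and the \emph{same} bounded polytope $\Delta\triangleq\nabla u_0(\mathbb{R}^m)=\nabla u_t(\mathbb{R}^m)$, because $\omega_0$ and $\omega_{\varphi_t}$ lie in the same class $c_1(M)$. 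Since $M$, hence $F$, is Fano, the defining inequalities $\langle p_i,y\rangle+1\ge0$ of $\triangle_F$ show that $0$ is an interior point, so $0\in\operatorname{int}\Delta$. Consequently $w_t=(1-t)u_0+tu_t$ is smooth, strictly convex and proper, its gradient maps onto $\operatorname{int}\Delta$, its minimum $m_t$ is attained at the unique $x_t$ with $\nabla w_t(x_t)=0$, and $|\nabla w_t|\le R\triangleq\sup_{y\in\Delta}|y|$, whence the global Lipschitz bound $w_t(x)\le m_t+R|x-x_t|$.

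The second step is the integral bound, and this is the only place where $\Gamma$ enters. Changing variables by $\eta=\nabla u_t(x)$ gives $\int_{\mathbb{R}^m}\det(u_{t,ij})\,dx=\operatorname{Vol}(\Delta)$, so integrating the Monge--Amp\`ere equation and using $c\le\Gamma\le C$ yields
\[
c\operatorname{Vol}(\Delta)\ \le\ \int_{\mathbb{R}^m}e^{-w_t}\,dx\ \le\ C\operatorname{Vol}(\Delta).
\]
Because $\Gamma$ is pinched between two positive constants, these are exactly the bounds one has in the pure toric case $\Gamma\equiv1$ treated in \cite{WANGZHU}; this is the precise sense in which the extra term is harmless. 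The lower bound on $m_t$ is then immediate: the Lipschitz estimate gives $\int e^{-w_t}\ge e^{-m_t}\int e^{-R|x-x_t|}\,dx$, so comparing with the upper integral bound produces a uniform $C_1$ with $m_t\ge-C_1$.

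The substantive part, and the main obstacle, is the uniform linear lower bound $w_t(x)\ge a|x-x_t|-C'$ together with the upper bound $m_t<C$. These are equivalent, through the identity $\int e^{-(w_t-m_t)}\,dx=e^{m_t}\int e^{-w_t}\,dx$ and the two-sided integral bound, to a uniform upper bound on $\int e^{-(w_t-m_t)}\,dx$. The growth \emph{rate} is easy to pin down: the recession function of a finite convex function is additive, so $w_t^{\infty}=(1-t)u_0^{\infty}+tu_t^{\infty}$, and each of $u_0^{\infty},u_t^{\infty}$ equals the support function $h_\Delta$ of $\Delta$ (since the gradient images are $\operatorname{int}\Delta$); hence $w_t^{\infty}=h_\Delta$, and because $0\in\operatorname{int}\Delta$ we get $w_t^{\infty}(v)\ge a_0|v|$ with $a_0=\operatorname{dist}(0,\partial\Delta)>0$. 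Thus $w_t$ grows at least linearly in every direction with a rate uniform in $t$.

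What is \emph{not} formal is the uniformity in $t$ (up to $R(M)$) of the intercept $C'$, equivalently the upper bound $m_t<C$: a priori the normalized convex functions $\tilde w_t(y)=w_t(x_t+y)-m_t$ could flatten on larger and larger regions as $t\to R(M)$, even while keeping the asymptotic slope $a_0$. This is exactly the point at which I would invoke \cite{WANGZHU}. The normalized functions $\tilde w_t$ are nonnegative, convex and uniformly $R$-Lipschitz, hence precompact in $C^0_{\mathrm{loc}}$, and their a priori estimate controls the minimum of the normalized toric potential and forces its uniform linear growth, using precisely the two-sided bound on the Monge--Amp\`ere mass, convexity, and the interiority $0\in\operatorname{int}\Delta$. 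Since in our situation the Monge--Amp\`ere density merely carries the extra factor $\Gamma$, and since $c\le\Gamma\le C$ guarantees that its total mass obeys the \emph{same} two-sided bound as in the toric case, their argument applies with no change beyond the numerical constants. This yields uniform $a$, $C$ and $C'$ and completes the proof; I expect essentially all the difficulty to be concentrated in this last, imported, step.
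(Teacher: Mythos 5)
Your proposal is correct and follows essentially the same route as the paper: the paper gives no independent proof of this proposition, but cites \cite{WANGZHU} with precisely your justification, namely that the only new feature of the real Monge--Amp\`ere equation is the factor $\Gamma(x,t)$, and the uniform pinching $c\leq\Gamma\leq C$ (coming from $Z_{\omega_{\varphi_t}}(F_o)\subset\mathcal{C}$ with $t$-independent image) lets the Wang--Zhu estimates go through with only changed constants. Your additional detail --- the fixed gradient image $\triangle$ with $0\in\operatorname{int}\triangle$, the two-sided bound on $\int e^{-w_t}\,dx$ via the change of variables, the Lipschitz bound giving $m_t\geq-C_1$, and the honest acknowledgment that the uniform intercept is the genuinely imported step --- is consistent with, and a faithful expansion of, the paper's one-line reduction.
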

Make a change of variables,
\begin{eqnarray*}
\int_{\mathbb{R}^{m}}\nabla u_{t}\cdot e^{-w_{t}}dx & = & \int_{\mathbb{R}^{m}}\nabla u_{t}\cdot\Gamma\cdot\det(u_{t,ij})dx\\
 & = & \int_{\triangle}y\cdot\rho_{DH}(y)dy.
\end{eqnarray*}
Where we denote $\triangle$ the image $\nabla u_{t}(\mathbb{R}^{m})$
which dose not depend on $t$, and
\[
\rho_{DH}(y)=\prod_{\alpha\in R_{\mathfrak{m}}^{+}}\sqrt{-1}\alpha\left(\sum_{i}-\frac{y_{i}}{4\pi}Z_{i}+I_{V}\right).
\]

By the divergence theorem, $\int_{\mathbb{R}^{m}}\nabla w_{t}\cdot e^{-w_{t}}dx=0$.
Then it follows that

\begin{equation}
\frac{1}{\left|\triangle\right|}\int_{\mathbb{R}^{m}}\nabla u_{0}\cdot e^{-w_{t}}dx=\frac{-t}{1-t}P_{\triangle}\triangleq\frac{-t}{1-t}\frac{1}{\left|\triangle\right|}\int_{\triangle}y\cdot\rho_{DH}(y)dy,\label{eq:barycenter identity}
\end{equation}
where $\left|\triangle\right|=\int_{\triangle}\rho_{DH}(y)dy=\int_{\mathbb{R}^{m}}e^{-w_{t}}dx$.

Note that $0\in\triangle$, we define
\[
R_{\triangle}=\sup\{0\leq t<1\mid\frac{-t}{1-t}P_{\triangle}\in\triangle\}.
\]
 Since $\nabla u_{0}(x)\in\triangle$, it follows from (\ref{eq:barycenter identity})
that $\frac{-t}{1-t}P_{\triangle}\in\triangle$ for $t<R(M)$. Thus
$R(M)\leq R_{\triangle}$.

To prove $R(M)=R_{\triangle}$, the following arguments are due to
\cite{ChiLi}.

If $R(M)=1$ we are done. In the following we assume $R(M)<1$.

It is shown in \cite{WANGZHU} that if $\left|x_{t}\right|$ is uniformly
bounded for $t\in[0,t_{0}]$, then (\ref{eq:Aubin's path})$_{t_{0}}$
is solvable. Thus there exists a sequence $\{t_{k}\}$, $t_{k}\rightarrow R(M)^{-}$,
such that $\left|x_{t_{k}}\right|\rightarrow\infty$. By passing to
a subsequence, we can assume $\nabla u_{0}(x_{t_{k}})\rightarrow\partial\triangle$.
We take an affine function $l(y)$ such that $l(\triangle)\geq0$
and $\lim_{k}l(\nabla u_{0}(x_{t_{k}}))=0$.

Denote $\overline{dx}=\frac{1}{\left|\triangle\right|}dx$. For any
$\epsilon>0$, by Proposition \ref{prop:WZ estimate}, there exists
$R_{\epsilon}>0$ which is independent of $k$ such that
\[
\int_{\mathbb{R}^{m}\backslash B_{R_{\epsilon}}(x_{t_{k}})}l\left(\nabla u_{0}\right)\cdot e^{-w_{t_{k}}}\overline{dx}<C\cdot\int_{\mathbb{R}^{m}\backslash B_{R_{\epsilon}}(x_{t_{k}})}e^{-a\left|x-x_{t}\right|}\overline{dx}<\epsilon.
\]
On the other hand, use the explicit formula of $u_{0}$ (\ref{eq:Fubini-Study}),
it is shown in \cite{ChiLi} that there exists $C>0$ which only depends
on $\triangle$ such that
\[
l\left(\nabla u_{0}(x)\right)\leq e^{CR_{\epsilon}}l\left(\nabla u_{0}(x_{t_{k}})\right)
\]
for all $x\in B_{R_{\epsilon}}(x_{t_{k}})$.

Then by (\ref{eq:barycenter identity})

\begin{eqnarray*}
l(\frac{-t_{k}}{1-t_{k}}P_{\triangle}) & = & \int_{\mathbb{R}^{m}}l\left(\nabla u_{0}\right)\cdot e^{-w_{t_{k}}}\overline{dx}\\
 & = & \int_{\mathbb{R}^{m}\backslash B_{R_{\epsilon}}(x_{t_{k}})}l\left(\nabla u_{0}\right)\cdot e^{-w_{t_{k}}}\overline{dx}+\int_{B_{R_{\epsilon}}(x_{t_{k}})}l\left(\nabla u_{0}\right)\cdot e^{-w_{t_{k}}}\overline{dx}\\
 & < & \epsilon+e^{CR_{\epsilon}}\cdot l\left(\nabla u_{0}(x_{t_{k}})\right),
\end{eqnarray*}
then let $k>N$ such that $e^{CR_{\epsilon}}\cdot l\left(\nabla u_{0}(x_{t_{k}})\right)<\epsilon$.

Hence $l(\frac{-t_{k}}{1-t_{k}}P_{\triangle})\rightarrow0$, this
implies $\frac{-t_{k}}{1-t_{k}}P_{\triangle}\rightarrow\partial\triangle$.
Thus we have $R(M)=R_{\triangle}$ as desired.

Finally, it is easy to see that $R_{\triangle}$ is same to the right
hand side of (\ref{R(M) formula}).

\section{Example}

Take $G=SU(2)$, $K=\{diag(e^{i\theta},e^{-i\theta})\mid\theta\in\mathbb{R}\}$,
$G^{\mathbb{C}}=SL(2,\mathbb{C})$, $P=SL(2,\mathbb{C})\cap\{\textrm{upper\ triangle\ matrices}\}$.
Let
\[
H_{\alpha}=\left[\begin{array}{cc}
1\\
 & -1
\end{array}\right],\ E_{\alpha}=\left[\begin{array}{cc}
0 & 1\\
0 & 0
\end{array}\right],\ E_{-\alpha}=\left[\begin{array}{cc}
0 & 0\\
1 & 0
\end{array}\right]
\]
then $\mathfrak{h}=\mathfrak{k}=Z(\mathfrak{k})=\mathbb{R}\cdot\sqrt{-1}H_{\alpha}$,
where the root $\sqrt{-1}\alpha\in\mathfrak{h}^{*}$ such that $\alpha(H_{\alpha})=2$.
The Killing form such that $\mathcal{B}(H_{\alpha},H_{\alpha})=2$.
The root decomposition is
\[
\mathfrak{g}^{\mathbb{C}}=\mathfrak{h}^{\mathbb{C}}\oplus\mathbb{C}\cdot E_{\alpha}\oplus\mathbb{C}\cdot E_{-\alpha},
\]
 where $R=R_{\mathfrak{m}}=\{\pm\alpha\}$, $R_{\mathfrak{m}}^{+}=\{-\alpha\}$.
Thus by (\ref{eq:I_v}), $I_{V}=\frac{\sqrt{-1}}{2\pi}H_{\alpha}$.
$G/K=\mathbb{CP}^{1}$ and $I_{V}$ gives the Fubini-Study metric.

Take $F=\mathbb{CP}^{1}$, the associated polytope $\triangle_{F}=[-1,1]\subset\mathfrak{t}^{*}$.
And $\tau:P\rightarrow\mathbb{C}^{*}$ be that $\tau\left(\left[\begin{array}{cc}
z & w\\
0 & z^{-1}
\end{array}\right]\right)=z$. The toric bundle is the $\mathbb{CP}^{2}$ with $1$-point blowup.

Identify $\mathfrak{h}^{*}$ with $\mathbb{R}$ via the basis $\{\left(\sqrt{-1}H_{\alpha}\right)^{\vee}\}$.
Then $I_{V}^{\vee}=\frac{1}{2\pi}$ and $\left(d\tau\right)^{*}(\frac{-1}{4\pi}\triangle_{F})=[\frac{-1}{4\pi},\frac{1}{4\pi}]$,
so $\bigtriangleup_{M}=\left(d\tau\right)^{*}(\frac{-1}{4\pi}\triangle_{F})+I_{V}^{\vee}=[\frac{1}{4\pi},\frac{3}{4\pi}]$,
\[
P=\frac{\int_{\bigtriangleup_{M}}x\cdot x\ dx}{\int_{\bigtriangleup_{M}}x\ dx}=\frac{13}{24\pi}.
\]
Thus $R(M)=\sup\{0\leq t<1\mid\frac{-t}{1-t}P+\frac{1}{1-t}I_{V}^{\vee}\in\bigtriangleup_{M}\}=\frac{6}{7}$.

It coincides with the previous results in \cite{Gabor} and \cite{ChiLi}.

\end{document}